\numberwithin{equation}{section}
\newcommand{\R}{\mathbb{R}}
\newcommand{\N}{\mathbb{N}}
\newcommand{\Z}{\mathbb{Z}}
\renewcommand{\d}{\mathrm{d}}
\renewcommand{\P}{\mathbb{P}}
\newcommand{\PP}[1]{\mathbb{P}\left\{#1\right\}}
\newcommand{\EE}[1]{\mathbb{E}\left\{#1\right\}}
\newcommand{\Pc}[2]{\mathbb{P}\left[#1 \middle| #2 \right]}
\newcommand{\E}{\mathbb{E}}
\renewcommand{\N}{\mathbb{N}}
\renewcommand{\b}{\mathbf{b}}
\theoremstyle{plain}
\newtheorem{thm}{Theorem}[section]
\newtheorem{prop}[thm]{Proposition}
\newtheorem{lem}[thm]{Lemma}
\newtheorem{cor}[thm]{Corollary}
\newtheorem{exa}[thm]{Example}
\theoremstyle{definition}
\newtheorem{con}{Condition}
\newtheorem{defi}[thm]{Definition}
\newtheorem{rmk}[thm]{Remark}
\author[1]{Viktor~Bezborodov\thanks{{\tt viktor.bezborodov@univr.it}.}}
\author[1]{Luca~Di~Persio\thanks{{\tt luca.dipersio@univr.it}.}}
\author[2]{Dmitri~Finkelshtein\thanks{{\tt d.l.finkelshtein@swansea.ac.uk}.}}
\affil[1]{Department of Computer Science, The University of Verona, Strada~le~Grazie 15, 37134 Verona, Italy}
\affil[2]{Department of Mathematics,
Swansea University, Bay Campus, Fabian Way, Swansea SA1 8EN, U.K.}
\author[3]{Yuri~Kondratiev\thanks{{\tt kondrat@math.uni-bielefeld.de}.}}
\author[3]{Oleksandr~Kutoviy\thanks{{\tt kutoviy@math.uni-bielefeld.de}.}}
\affil[3]{Fakult\"{a}t
f\"{u}r Mathematik, Universit\"{a}t Bielefeld, Postfach 110 131, 33501 Bielefeld, Germany}
\title{Fecundity regulation in a~spatial~birth-and-death~process}
\begin{document}

\maketitle

\begin{abstract}
	We study a Markov birth-and-death process on a space of locally finite configurations, which describes an ecological model with a density dependent fecundity regulation mechanism. We establish existence and uniqueness of this process and analyze its properties. In particular, we show global  time-space boundedness of the population density and, using a constructed Foster--Lyapunov-type function, we study return times to certain level sets of tempered configurations. We find also sufficient conditions that the degenerate invariant distribution is unique for the considered process.
	\end{abstract}

\textit{Mathematics subject classification}: {60G55, 60J25, 82C21}

\textit{Keywords:} {measure-valued process,	Markov evolution,  individual based models, spatial birth-and-death dynamics, spatial ecology, density dependent fecundity}

\section{Introduction}
Most of models in ecology are structured by space. Nowadays, individual based models in spatial ecology form a well established research area. We~refer for historical comments and detailed review to \cite{Ovaskainen+Co}. Mathematically, such models may be often described as Markov birth-and-death processes on configuration spaces over proper location sets.
A simple example is an independent birth process in a population $\gamma$ located in the Euclidean space $\R^\d$: each member of the population $\gamma$, placed at $y\in\gamma$, independently sends its off-spring to the target location $x\in\R^\d$ after an exponentially distributed random time. The displacement $x-y$ is chosen at random according to a certain dispersion kernel. 
The rate $\varkappa>0$ of the time-distribution is called the (density independent) \emph{fecundity} rate. Then, regardless of a particular dispersion kernel, the density of the population will grow exponentially in time.

The simplest regulation mechanism to prevent the unbounded growth is to include a (density independent) \emph{mortality} to the process. Namely, each member of the population may die after an exponentially distributed random time (independent from the birth time) with a rate $m>0$. Then, for $m< \varkappa$, the density of the system is still exponentially increasing in time, whereas $m=\varkappa$ is a critical value where the density is stabilized, and finally, for $m>\varkappa$, the density will exponentially decay showing the extinction of the population. This process can be treated also as a nonlocal branching process (cf. e.g. \cite[Section~4.3]{Li2011}), namely, each member dies after random time and, with certain rates, may produce $0$ or $2$ off-springs with the restriction that one of the latter is placed at parent's position, see \cite{Dur1979}. We will call this process \emph{spatial contact model} following \cite{KonSkor}, see also \cite{KonKutPirogov,KonPirogovZhizhina}. Note that, in the so-called critical case $m=\varkappa$, the considered system will have a unique invariant distribution for each dimension $d\geq 3$ (a probability measure over the space of configurations in $\R^\d$). This measure, in particular, has fast growth of factorial moments. 

A more sophisticated regulation mechanism is to consider a density dependent mortality rate. Such a rate is just the sum of the constant mortality and competitions with all other members of the population defined through a competition kernel. This describes the so-called \emph{spatial logistic model}, see \cite{Ovaskainen+Co} and references therein for its biological motivations. The corresponding Markov processes on finite sets (populations) in $\R^\d$ was analysed in \cite{FournierMeleard},  see also \cite{EK14}. Infinite populations were studied mathematically
in \cite{FinKoKut1,FinKoKut2} and \cite{KoKozitsy1,KoKozitsy3} in terms of the evolution of states
(measures) on the space of locally finite configurations. 
Such approach to study dynamics of infinite populations is known as statistical one. More precisely, the evolution of states in this approach is described by the evolution of the corresponding factorial moments (a.k.a. correlation functions). Note that a construction of the corresponding Markov process remains a challenging problem.

To the best of our knowledge, for locally finite systems in $\R^\d$, there are only few references concerning the construction of a general Markov birth-and-death process, namely, \cite{Garcia, GarciaKurtz,constantbirthrate}. In the present paper we study several models which belong to the considered in \cite{GarciaKurtz} case with a constant death rate and a birth rate with certain structural properties (see Section~\ref{sec:BaD} below for details); the process is constructed as a solution to stochastic equation. On the contrary, a birth-and-death process with constant birth rate and the aforementioned density dependent mortality was constructed in \cite{constantbirthrate} using a different approach which is based on a comparison with a Poisson random connection graph.

The main novelty of the present paper is that we consider a rather different regulation mechanism compared to the spatial logistic model. Namely, keeping a constant (density independent) mortality rate, we consider birth with a density dependent fecundity of the form
\begin{equation}\label{introfec}
	\varkappa(y,\gamma):=\varkappa e^{-l(y,\gamma)}, \qquad l(y,\gamma):=\sum_{z\in\gamma\setminus\{y\}}\varphi(y-z),
\end{equation}
where, we recall, $\gamma\subset\R^\d$ represents the locally finite set of positions of the population members.
In other words, the competition (described by the kernel~$\varphi$) within the population does not influence chances to die, but decreases chances of producing off-springs. We show (Proposition~\ref{b bounded}) that under minimal restrictions on the dispersion and the competition kernels the whole birth rate for the system (which includes summation over different $y\in\gamma$) remains globally bounded as a function of the existing configuration and the position for an off-spring. This allows to show the existence and uniqueness of the corresponding process (Theorem~\ref{thm:existuniqfec}). It is worth noting that we do not require any comparison between the dispersion and the competition kernels. We allow also a modification of \eqref{introfec} with (in the simplest model case)
\[
\varkappa(y,\gamma)=\varkappa e^{-l(y,\gamma)} \bigl(1+p\, l(y,\gamma)\bigr), \quad p\geq0.
\]
This modification is well motivated biologically in the case $p>1$ as then the competition starts to affect negatively the sending of off-springs only after the population becomes `well-developed' (i.e. the value of $s:=l(y,\gamma)$ becomes large enough and the unimodal function $\varkappa e^{-s}(1+ps)$ starts to decay). This corresponds to the so-called \emph{weak Allee effect}, see e.g. \cite{Shi2006}. Note that such generalization was considered also in \cite{FinKoKut4} in terms of the aforementioned statistical dynamics; the existence and properties of the corresponding Markov process
remained open since then.

In Section~\ref{sec:boundedbirth}, we study some general properties of processes with constant death and bounded birth rates (in addition to the fecundity model above, we consider two others, see Examples~\ref{ex:Glauber}--\ref{ex:establishment}). In particular, we prove that return times to sufficiently large level sets of configurations are exponentially integrable random variables (Proposition~\ref{prop:expmoments}). To show this, we introduce and study a  Forster--Lyapunov  function on the space of (tempered) configurations.

Finally, in Section~\ref{sec:uniqinv}, we introduce sufficient conditions for general sublinear birth rates (including, in particular, those for the fecundity model), which ensure the uniqueness of the degenerate invariant distribution for the considered process. 

\section{Birth and death processes on configuration space}\label{sec:BaD}

Let $\mathscr{B}(\R^\d)$ denote the Borel $\sigma$-algebra  over the Euclidean space $\R ^\d$,  $\d \in \N$.
	We study a birth-and-death process taking values in the space of locally finite configurations (discrete subsets) of $\R^\d$:
	\[
	\Gamma :=\bigl\{ \gamma \subset \Lambda \bigm\vert |\gamma \cap B| <
	+ \infty \text{ for any bounded } B \in \mathscr{B}(\R^\d)\bigr \}.
\]
Henceforth, $|\eta|$ denotes the number of points in a discrete finite set $\eta\subset\R^\d$.

Throughout the paper, we identify a configuration $\eta\in\Gamma_G$ with a discrete (counting) measure on $(\R^\d,\mathscr{B}(\R^\d))$ defined by assigning a unit mass to each atom at $x\in\eta$.

We fix an arbitrary $\varepsilon>0$ and consider the function
\begin{equation}\label{defG}
G(x):=(1+|x|)^{-\d-\varepsilon}, \quad x\in\R^\d,
\end{equation}
where $|x|$ denotes the Euclidean norm on $\R^\d$. We denote then
\begin{equation*}
\Gamma_G:=\Bigl\{\gamma\in\Gamma \Bigm\vert \langle G,\gamma\rangle:= \sum_{x\in\gamma} G(x)<\infty\Bigr\},
\end{equation*}
a set of \emph{tempered} configurations. We define a sequential topology on $\Gamma_G$ by assuming that $\gamma_n\to\gamma$, $n\to\infty$, if only
\[
	\lim_{n\to\infty}\langle f,\gamma_n\rangle=\langle f,\gamma\rangle
\]
for all $f\in C_b(\R^\d)$ (the space of bounded continuous functions on $\R^\d$) such that $|f(x)|\leq M_f G(x)$ for some $M_f>0$ and all $x\in\R^\d$. Let $\mathscr{B}_G(\Gamma)$ denote the corresponding Borel $\sigma$-algebra.

\begin{defi}\label{defBaD}
Let $b: \R ^\d \times \Gamma_G \to \R _+:=[0,\infty)$ be a measurable function. We describe a spatial birth-and-death process $\eta:\R_+\to\Gamma_G$ with the unit death rate and the birth rate $b$ through the following three properties:
\begin{enumerate}
	\item If the system is in a state $\eta_t \in \Gamma_G$
	at the time $t\in\R_+$, then the probability
	that a new particle appears (a ``birth'' happens) in
	a bounded set $B\in \mathscr{B}(\R ^\d)$
	during a time interval $[t;t+ \Delta t]$ is
	\begin{equation}\label{meaning_of_birthrate}
		\Delta t \int\limits _{ B}b(x, \eta)dx + o(\Delta t).
	\end{equation}
	\item If the system is in a state $\eta_t \in \Gamma_G$
	at the time $t\in\R_+$, then, for each $x\in\eta_t$, the probability
	that the particle at $x$ dies
	during a time interval $[t;t+ \Delta t]$ is $1\cdot \Delta t + o(\Delta t)$.
	\item With probability $1$ no two events described above happen simultaneously.
\end{enumerate}
\end{defi}

	\begin{rmk}
Following a convention for continuous-space processes, see e.g. \cite{GarciaKurtz},  we will say that the function $b$ is the \emph{birth rate} of the process, even though the rate of birth inside a bounded region $B \in \mathscr{B}(\R ^\d)$ is given by 
	 \[
	 \nu_\eta(B):=\int_B b(x, \eta)dx,
	 \] 
that is,  $\PP{|(\eta _t \setminus \eta _0) \cap B| =1 } = \nu_{\eta_0}(B) t + o(t)$. Thus, the function $b$ is actually   a version of the Radon--Nikodym derivative of the rate (considered as the measure $\nu_\eta$) w.r.t. the Lebesgue measure.
For the notion of the transition rates for interacting particle systems, see e.g. \cite[Chapter 1.3]{Lig85}.
	\end{rmk}

	The (heuristic) generator of our process is
	\begin{equation} \label{the generator}
	L F (\eta) =
	\sum\limits _{x \in \eta}  \bigl(F(\eta \setminus {x}) - F(\eta)\bigr) +\int\limits _{\R^\d} b(x, \eta) \bigl(F(\eta \cup {x}) -F(\eta)\bigr) dx.
	\end{equation}

	\begin{defi}\label{def:sol}
\begin{enumerate}
		\item Let $\tilde N$ be the Poisson point process on
	$\R_+  \times \R^\d \times \R_+ ^2 $ with
	mean measure $ds \times dx \times du \times e ^{-r} dr$.
The process $\tilde N$ is said to be compatible w.r.t. a filtration $\{\mathcal{F}_t\}$ if, for any measurable $A\subset \R^\d \times\R_+^2$, $\tilde N([0, t],A)$ is $\mathcal{F}_t$-measurable and $\tilde N((t, s],A)$ is independent of $\mathcal{F}_t$ for $0 < t < s$.
\item 	Let  $\eta _0$ be a $\Gamma_G$-valued $\mathcal{F}_0$-measurable random variable independent on $\tilde N$. Consider a point process $\tilde {\eta} _0$ on $\R^\d\times\R_+$ obtained by attaching to each point of $\eta_0$ an independent unit exponential random variable. Namely, if $\eta_0=\{x_i:i\in\N\}$ then
	$\tilde {\eta} _0=\{(x_i,\tau_i):i\in\N\}$ and $\{\tau_i\}$ are independent unit exponentials, independent of $\eta_0$ and $\tilde N$.
\item We will say that a process $(\eta_t)_{t\geq0}$ with sample paths in the Skorokhod space
$D_{\Gamma_G}[0,\infty)$ has the unit death rate and the birth rate $b$ if it is adapted to a filtration $\{\mathcal{F}_t\}$ w.r.t. to which $\tilde N$ is compatible and if, for any bounded $B\in\mathscr{B}(\R^\d)$, the following equality holds almost surely
	\begin{equation} \label{se GK}
	\begin{aligned}
	\eta _t (B) = \int\limits _{(0,t] \times B \times\R_+^2}
	& I _{ [0,b(x,\eta _{s-} )] } (u) I \{ r> t - s \}
	\tilde N(ds,dx,du,dr) \\
	+ \int\limits _{B \times \R_+ } &
	I \{ r > t \} \tilde {\eta} _0 (dx, d r )
	\end{aligned}
	\end{equation}
	where $\eta_t(B)=|\eta_t\cap B|$ is the number of points in $B$; note that henceforth we use configurations and counting measures interchangeably. 
	\end{enumerate}
	\end{defi}

\begin{rmk}
	We will sometimes denote the solution process at time $t$ as $\eta(t, \eta _0)$
	to underline the dependence on the initial condition.
	In other words, $\eta_t=\eta(t,\eta_0)$.
	As is the convention for Markov processes, we use notation
	$
	\E _{\eta _0}
	$ for the expectation related to the distribution of  $(\eta(t, \eta _0))_{t \geq 0}$.
\end{rmk}

\begin{defi}
Let $\Gamma\ni \eta=\{x_i\}$ and $y\in\R^\d$. We set $S_y\eta:=\{x_i-y\}\in\Gamma$. 
\begin{enumerate}
	\item The birth rate $b$ is said to be translation invariant if
	\[
		b(x+y,\eta)=b(x,S_y\eta), \quad x\in\R^\d, \eta\in\Gamma_G.
	\]
	\item A $\Gamma_G$-valued random variable $\eta$ is said to be  translation invariant if the distribution of $S_y\eta$ does not depend on $y$.
\end{enumerate}
\end{defi}

The following statement is a particular case of results from \cite{GarciaKurtz}.

\begin{thm}[cf. {\cite[Theorem~2.13, Lemma~3.14]{GarciaKurtz}}]\label{thm_exist_uniq}
Suppose that
\begin{equation}\label{b_is_bdd}
\mathbf{b}:=\sup_{\substack{x\in\R^\d\\\eta\in\Gamma_G}} b(x,\eta)<\infty,
\end{equation}
and, for some $M>0$,
\begin{equation}\label{main_cond_GK}
\sup_{\eta\in\Gamma_G}\bigl\lvert b(x,\eta\cup y) -b(x,\eta)\bigr\rvert\leq M G(x-y), \quad x,y\in\R^\d.
\end{equation}
Then there exists a unique solution to \eqref{se GK} in the sense of Definition~\ref{def:sol}. If,~additionally, both $b$ and $\eta_0$ are translation invariant, then $\eta_t$ is translation invariant for $t>0$.
\end{thm}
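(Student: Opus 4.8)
The statement is explicitly a specialization of results in \cite{GarciaKurtz}, so the plan is not to build the construction from scratch but to verify that the standing hypotheses of those results are met under \eqref{b_is_bdd}--\eqref{main_cond_GK}, and then to read off existence, uniqueness, and the translation-invariance conclusion. Concretely, I would proceed as follows.

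\emph{Step 1: Existence and uniqueness.} The stochastic equation \eqref{se GK} is exactly the thinning equation studied in \cite[Section~2]{GarciaKurtz}, with the Poisson point process $\tilde N$ on $\R_+\times\R^\d\times\R_+^2$ playing the role of the driving noise and the unit-exponential marks $\{\tau_i\}$ encoding the remaining lifetimes of the initial points. I would check that assumption \eqref{b_is_bdd} gives the global boundedness of the birth rate needed to dominate the birth mechanism by a Poisson process of intensity $\mathbf{b}\cdot dx$, so that no explosion occurs on any bounded region in finite time, while \eqref{main_cond_GK} is precisely the local-interaction (Lipschitz-type) condition of \cite[Theorem~2.13]{GarciaKurtz} that controls the dependence of $b(x,\cdot)$ on the configuration through the summable kernel $G$; note $\langle G,\eta\rangle<\infty$ on $\Gamma_G$, so the perturbation sum $\sum_{y\in\eta}MG(x-y)$ is finite and this is what makes the fixed-point/graphical-construction argument of \cite{GarciaKurtz} converge and yield a unique $D_{\Gamma_G}[0,\infty)$-valued solution adapted to $\{\mathcal F_t\}$. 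The main point to be careful about is that the solution stays in the tempered space $\Gamma_G$ for all $t$: this is \cite[Lemma~3.14]{GarciaKurtz} and follows because the expected mass $\E\langle G,\eta_t\rangle$ satisfies a Gronwall-type bound (new points born at rate $\le\mathbf b\,dx$ contribute $\mathbf b\int_{\R^\d}G(x)\,dx<\infty$ per unit time, old points die at unit rate), so temperedness is propagated.

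\emph{Step 2: Translation invariance.} Assume now that $b$ and $\eta_0$ are translation invariant. The shift $S_y$ acts on the noise $\tilde N$ in the spatial variable only, and since the mean measure $ds\times dx\times du\times e^{-r}\,dr$ is invariant under $x\mapsto x-y$, the shifted noise $S_y\tilde N$ has the same law as $\tilde N$; likewise $S_y\tilde\eta_0$ has the same law as $\tilde\eta_0$ by translation invariance of $\eta_0$ together with the i.i.d.\ exponential marks. Applying $S_y$ to equation \eqref{se GK} and using the translation invariance of $b$ (so that $b(x,\eta_{s-})$ transported by $S_y$ becomes $b(x-y,S_y\eta_{s-})$), one sees that $(S_y\eta_t)_{t\ge0}$ solves the same stochastic equation \eqref{se GK} driven by $S_y\tilde N$ with initial condition $S_y\eta_0$. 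By the uniqueness in law established in Step~1 and the distributional invariance of the driving data, $(S_y\eta_t)_{t\ge0}\overset{d}{=}(\eta_t)_{t\ge0}$; in particular $\eta_t$ is translation invariant for every $t$.

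\emph{Main obstacle.} The delicate point is entirely in Step~1: ensuring that the construction of \cite{GarciaKurtz} does not merely produce a process in $\Gamma$ but one that remains in the smaller space $\Gamma_G$ of tempered configurations, with continuity of paths in the prescribed sequential topology, and that uniqueness holds within this class. Everything else is bookkeeping. Since \eqref{b_is_bdd}--\eqref{main_cond_GK} are stated so as to match \cite[Theorem~2.13, Lemma~3.14]{GarciaKurtz} verbatim, the proof ultimately reduces to citing those results after this verification, and I would present it accordingly rather than reproving the graphical construction.
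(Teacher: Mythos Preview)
Your overall strategy matches the paper's: verify that \eqref{b_is_bdd}--\eqref{main_cond_GK} imply the hypotheses of \cite[Theorem~2.13, Lemma~3.14]{GarciaKurtz} and then cite. Your Step~2 argument for translation invariance is fine, though the paper simply invokes \cite[Lemma~3.14]{GarciaKurtz} for that conclusion (so that lemma concerns translation invariance, not the temperedness propagation you ascribe to it).

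There is, however, a real gap in Step~1. You assert that \eqref{main_cond_GK} matches the Lipschitz condition of \cite[Theorem~2.13]{GarciaKurtz} ``verbatim'' and that $\sum_{y\in\eta}MG(x-y)<\infty$ for $\eta\in\Gamma_G$. Neither is immediate. Garcia--Kurtz's Condition~2.2 requires a Lipschitz estimate through the \emph{unshifted} weight $G(y)$ (the tempered seminorm), whereas \eqref{main_cond_GK} together with \cite[Lemma~2.15]{GarciaKurtz} only gives control through $\int G(x-y)\,|\eta_1-\eta_2|(dy)$. Converting one into the other is the actual content of the paper's proof: one uses the sub-multiplicative inequality
\[
G(x-y)\,G(y-z)\leq G(x-z),
\]
which follows at once from $1+|x-z|\leq(1+|x-y|)(1+|y-z|)$; taking $z=0$ and swapping $x\leftrightarrow y$ yields $G(x-y)\leq(1+|x|)^{\d+\varepsilon}G(y)$, so that $|b(x,\eta_1)-b(x,\eta_2)|\leq M(1+|x|)^{\d+\varepsilon}\int G(y)\,|\eta_1-\eta_2|(dy)$. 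The same inequality is what makes $\sum_{y\in\eta}G(x-y)$ finite for $\eta\in\Gamma_G$; membership in $\Gamma_G$ alone does not give it. The paper also separately checks Condition~2.1 (immediate from \eqref{b_is_bdd} and $G\in L^1$) and the remaining contraction hypothesis of \cite[Theorem~2.13]{GarciaKurtz}, namely $\sup_x\int c(x)MG(x-y)c(y)^{-1}\,dy<\infty$ for some bounded positive $c$, which holds with $c\equiv1$. None of this is hard, but it \emph{is} the proof, and your write-up skips it.
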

To prove the latter statement, we need the following simple lemma.
\begin{lem}
For any $x,y,z\in\R^\d$,
\begin{equation}\label{G_ineq}
	G(x-y)G(y-z)\leq G(x-z).
\end{equation}
\end{lem}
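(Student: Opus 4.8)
The plan is to reduce the inequality to the elementary fact that $(1+a)(1+b)\geq 1+a+b$ for nonnegative reals, combined with monotonicity of the map $t\mapsto(1+t)^{-\d-\varepsilon}$ and the triangle inequality. Concretely, I would set $a:=|x-y|$ and $b:=|y-z|$, so that by definition
\[
G(x-y)\,G(y-z)=(1+a)^{-\d-\varepsilon}(1+b)^{-\d-\varepsilon}=\bigl((1+a)(1+b)\bigr)^{-\d-\varepsilon}.
\]

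Next I would expand $(1+a)(1+b)=1+a+b+ab\geq 1+a+b$, using only $ab\geq0$, and then apply the triangle inequality $|x-z|\leq|x-y|+|y-z|=a+b$ to get $1+a+b\geq 1+|x-z|$. Hence $(1+a)(1+b)\geq 1+|x-z|$.

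Finally, since the exponent $-\d-\varepsilon$ is negative, the function $t\mapsto t^{-\d-\varepsilon}$ is strictly decreasing on $(0,\infty)$; applying it to the inequality $(1+a)(1+b)\geq 1+|x-z|$ reverses it, yielding
\[
G(x-y)\,G(y-z)=\bigl((1+a)(1+b)\bigr)^{-\d-\varepsilon}\leq\bigl(1+|x-z|\bigr)^{-\d-\varepsilon}=G(x-z),
\]
which is the claim. There is no real obstacle here: the statement is purely elementary, and the only points requiring (minimal) care are the correct direction of the inequality under the decreasing power function and the use of the triangle inequality together with nonnegativity of the cross term $ab$.
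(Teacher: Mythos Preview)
Your proof is correct and is essentially identical to the paper's: both reduce the claim to the chain $1+|x-z|\leq 1+|x-y|+|y-z|\leq (1+|x-y|)(1+|y-z|)$ and then apply the decreasing power $t\mapsto t^{-\d-\varepsilon}$. The paper merely states this inequality in one line, while you spell out the monotonicity step explicitly.
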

\begin{proof}
Immediately follows from \eqref{defG} and the inequality
\[
	1+|x-z|\leq 1+|x-y|+|y-z|\leq (1+|x-y|)(1+|y-z|).\qedhere
\]
\end{proof}
\begin{proof}[Proof of Theorem~\ref{thm_exist_uniq}]
We follow the ideas of \cite[Remark 4.1]{Xin}. First of all, \eqref{b_is_bdd} yields
\[
\sup_{\eta\in\Gamma_G}\int_{\R^\d}G(x)b(x,\eta)\,dx\leq \mathbf{b}
\int_{\R^\d}G(x)\,dx<\infty,
\]
that implies that \cite[Condition~2.1]{GarciaKurtz} is satisfied.

Next, by \cite[Lemma~2.15]{GarciaKurtz}, \eqref{main_cond_GK} implies that, for any $\eta_1,\eta_2\in\Gamma_G$ and $x\in\R^\d$,
\[
	\bigl\lvert b(x,\eta_1)-b(x,\eta_2)\bigr\rvert\leq M\int_{\R^\d}G(x-y)|\eta_1-\eta_2|(dy),
\]
where $|\eta_1-\eta_2|$ means the total variation norm of the (signed) discrete measure $\eta_1-\eta_2$ on $(\R^\d,\mathscr{B}(\R^\d))$. By using \eqref{G_ineq} with $z=0$ (and swapping $x$ and $y$), we get that, for each $x\in\R^\d$,
\[
	\bigl\lvert b(x,\eta_1)-b(x,\eta_2)\bigr\rvert\leq M\bigl(1+|x|\bigr)^{d+\varepsilon}\int_{\R^\d}G(y)|\eta_1-\eta_2|(dy),
\]
that implies that \cite[Condition~2.2]{GarciaKurtz} is satisfied. 

The last assumption of \cite[Theorem~2.13]{GarciaKurtz} can be read in our settings as follows
	\[
	\sup_{x}\int_{\R^\d} \frac{c(x)MG(x-y)}{c(y)}dy<\infty
	\]
	for some positive bounded (cf.~\cite[Remarks~4.1(a)]{Xin}) function $c$; the latter inequality evidently holds with $c\equiv 1$. As a result, one gets the statement from \cite[Theorem~2.13, Lemma~3.14]{GarciaKurtz}.
\end{proof}

\section{Description of the model}
We consider a birth-and-death process on $\Gamma_G$ with the unit death rate and the birth rate given, for some $0\leq a,c,\varphi\in L^1(\R^\d)$, by
	\begin{equation}\label{brate fecundity}
	b(x,\eta) = \sum\limits _{y \in \eta} a(x-y)
\biggl(1+\sum\limits _{z \in \eta \setminus \{y\}}
	c(z-y)\biggr)
	\exp\biggl(-\sum\limits _{z \in \eta \setminus \{y\}}
	\varphi(z-y)\biggr)	.
	\end{equation}

In view of \eqref{meaning_of_birthrate}, the rate \eqref{brate fecundity} can be interpreted as follows. Let $\varkappa=\int_{\R^\d} a(x)\,dx>0$. If the system is in a state $\eta \in \Gamma_G$ at the time $t\in\R_+$, then each $y\in\eta$ may send an off-spring after exponential random time whose rate is $\varkappa r(y,\eta)$,  where
\begin{equation}\label{r_rate}
r(y,\eta):=\biggl(1+\sum\limits _{z \in \eta \setminus \{y\}}
	c(z-y)\biggr)
	\exp\biggl(-\sum\limits _{z \in \eta \setminus \{y\}}
	\varphi(z-y)\biggr).
\end{equation}
The off-spring will be sent according to the probability distribution on $\R^\d$ with the (normalized) density $\varkappa^{-1}a$, i.e. the probability that the off-spring (sent from $y\in\eta_t$) appears in a bounded $B\in\mathscr{B}(\R^\d)$ is $\varkappa^{-1}\int_B a(x-y)\,dx$. Note that we allow $c=0$ a.e. 

In ecology, the rate $\varkappa r(y,\eta)$ is called the \emph{fecundity}. A model example is the case when $c(z)=p\varphi(z)$, $z\in\R^\d$, for some $p\geq 0$. Then \eqref{r_rate} is just the value of $R_p(s):=(1+ps)e^{-s}$, $s\in\R_+$, at $s=\sum\limits _{z \in \eta \setminus \{y\}} \varphi(z-y)$. The function $R_p$ is decreasing on $\R_+$ for $p\in[0,1]$ and is \emph{unimodal} for $p>1$, i.e. it has a unique maximum point; at $s=\frac{p-1}{p}$. As a result, the case $p\in[0,1]$ describes the model where the `wish' for $y\in\eta_t$ to send an off-spring decays because of other particles around $y$. Whereas the  case $p>1$ describes the so-called \emph{weak Allee effect} when the small density of the system around $y$ increases the chances for an off-spring to be sent, but there exists a threshold for that density after which the surrounders of $y$ decrease the chances.

	The following lemma is the key tool in proving the global boundedness of the birth rate \eqref{brate fecundity}.

	\begin{lem}\label{lem1dot4}
		Let $\alpha, \rho>0$, and let  $b_f:\R_+\to(0,\infty)$ be a bounded decreasing to $0$ on $\R_+$  function, such that
		\begin{equation}\label{v0}
		\int_{\R_+} b_f(s)s^{\d-1}\,ds<\infty.
		\end{equation}
		Let $f,g:\R^\d\to\R_+$ be measurable functions, $f$ is bounded, such that
		\begin{alignat}{2}
		&f(x)\leq b_f(|x| ), &&\qquad x\in\R^\d,\label{v1}\\
		&g(x)\geq \alpha, &&\qquad |x|\leq \rho.\label{v2}
		\end{alignat}
		Then
		\begin{equation*}
		\sup\limits _{\substack{x \in \R ^\d\\\eta \in \Gamma}}
		\sum\limits _{y \in \eta} f(x-y)\exp\Bigl\{-\sum\limits _{z \in \eta \setminus y} g(z-y) \Bigr\} < \infty.
		\end{equation*}
	\end{lem}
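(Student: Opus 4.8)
The plan is to reduce the supremum over locally finite configurations $\eta$ to a deterministic geometric estimate by discretizing $\R^\d$ into unit cubes (or balls) and counting how many points of $\eta$ can sit in each cube. Fix $x\in\R^\d$ and $\eta\in\Gamma$. For a point $y\in\eta$, the exponential weight $\exp\{-\sum_{z\in\eta\setminus y} g(z-y)\}$ is controlled, via \eqref{v2}, by the number of points of $\eta$ lying within distance $\rho$ of $y$: if $n(y):=|\eta\cap B_\rho(y)|-1$ denotes the number of \emph{other} points of $\eta$ in the $\rho$-ball around $y$, then $\sum_{z\in\eta\setminus y} g(z-y)\geq \alpha\, n(y)$, hence the weight is at most $e^{-\alpha n(y)}$. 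So it suffices to bound $\sum_{y\in\eta} f(x-y) e^{-\alpha n(y)}$.

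Next I would organize the sum $\sum_{y\in\eta} f(x-y)e^{-\alpha n(y)}$ by spatial shells around $x$. Partition $\R^\d$ into a countable family of congruent boxes $\{Q_k\}$ of diameter, say, $\rho/2$ (so that any two points in the same box are mutual $\rho$-neighbors). For a box $Q_k$ containing $m_k:=|\eta\cap Q_k|$ points of $\eta$, each such point $y$ has $n(y)\geq m_k-1$, so the contribution of $Q_k$ is at most $m_k\,(\sup_{y\in Q_k} f(x-y))\,e^{-\alpha(m_k-1)}$. Since $\sup_{m\geq 1} m\,e^{-\alpha(m-1)}=:C_\alpha<\infty$, the contribution of each nonempty box is at most $C_\alpha \sup_{y\in Q_k} f(x-y)$, uniformly in how many points $\eta$ places there. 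Therefore
\[
\sum_{y\in\eta} f(x-y)e^{-\alpha n(y)} \leq C_\alpha \sum_{k} \sup_{y\in Q_k} f(x-y).
\]
It remains to bound $\sum_k \sup_{y\in Q_k} f(x-y)$ uniformly in $x$.

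For the last sum I would use \eqref{v1} together with the monotonicity of $b_f$: for $y\in Q_k$, $f(x-y)\leq b_f(|x-y|)\leq b_f(\mathrm{dist}(x,Q_k))$ as long as $Q_k$ is not the box containing $x$ (for which we just use the bound $\sup f<\infty$, contributing one extra finite term). Grouping the boxes $Q_k$ by the integer $j$ with $\mathrm{dist}(x,Q_k)\in[j\rho/2,(j+1)\rho/2)$, the number of such boxes is $O(j^{\d-1})$, and each contributes at most $b_f(j\rho/2)$. Hence $\sum_k \sup_{y\in Q_k} f(x-y)\leq \sup f + C\sum_{j\geq 1} j^{\d-1} b_f(j\rho/2)$, and this series is finite precisely because of the integrability assumption \eqref{v0} (comparing the series with the integral $\int_{\R_+} b_f(s)s^{\d-1}\,ds$, using that $b_f$ is decreasing). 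The resulting bound is independent of $x$ and $\eta$, which is the claim.

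The main obstacle — really the only nonroutine point — is choosing the discretization scale correctly so that both mechanisms work simultaneously: the boxes must be small enough (diameter $\leq\rho$) that co-occupancy of a box forces mutual $g$-penalties through \eqref{v2}, yet the shell-counting argument and the comparison of $\sum_j j^{\d-1}b_f(j\rho/2)$ with $\int b_f(s)s^{\d-1}\,ds$ must remain valid; taking the box diameter equal to a fixed fraction of $\rho$ handles both. One should also be a little careful that the estimate $n(y)\geq m_k-1$ is applied per box and that the per-box bound $C_\alpha\sup_{Q_k} f(x-\cdot)$ does not depend on the actual occupancy $m_k$, which is exactly what makes the bound uniform over all of $\Gamma$ (including configurations with arbitrarily large local clusters).
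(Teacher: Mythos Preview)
Your proof is correct and follows essentially the same approach as the paper: partition $\R^\d$ into cubes small enough that any two points in one cube are within distance $\rho$, use \eqref{v2} to bound each point's exponential weight by $e^{-\alpha(m_k-1)}$ where $m_k$ is the cube's occupancy, apply $\sup_{m\geq1} m\,e^{-\alpha(m-1)}<\infty$ to make the per-cube contribution independent of $m_k$, and then sum $b_f(\mathrm{dist}(x,Q_k))$ over cubes via \eqref{v0}. The only cosmetic differences are that the paper takes cubes of side $\rho/\sqrt{\d}$ (diameter $\rho$) and sums directly over the lattice index rather than grouping into shells.
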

	\begin{proof}
		For each $x=(x_1,\ldots,x_d)\in\R^\d$, let $|x|$ denote the Euclidean norm of $x$, and let $|x|_\infty :=\max\limits_{1\leq i\leq \d}|x_i|$. We have then
		\begin{equation}\label{clearly}
		|x|_\infty\leq |x|\leq \sqrt{\d}|x|_\infty, \quad x\in\R^\d.
		\end{equation}
		Set $q:=\frac{\rho}{\sqrt{\d}}>0$, and, for each $\bm{k}=(k_1,\ldots,k_d)\in\Z^\d$, consider a cube
		\[
		H_q(\bm{k})=\Bigl\{ x\in\R^\d \Bigm\vert q\Bigl(k_i-\frac{1}{2}\Bigr) < x_k\leq q\Bigl(k_i+\frac{1}{2}\Bigr) \Bigr\}
		\]
		centered at $q \bm{k}\in\R^\d $ with edges of the length  $q$.   Note that $\R^\d=\bigsqcup_{\bm{k}\in\Z^\d} H_q(\bm{k})$.

		For each $z \in\R^\d$ with $|z|_\infty\leq q$, we have, by \eqref{clearly}, $|z|\leq \rho$, and hence, by \eqref{v2}, $g(z)\geq \alpha$. Therefore, for $x,y \in H_q(\bm{k})$, $\bm{k} \in \Z ^\d$,  we have $|x-y|_{\infty} \leq q$ and hence 
		\begin{equation*}
		g(x-y)\geq \alpha, \quad x,y\in H_q(\bm{k}), \ \bm{k}\in\Z^\d.
		\end{equation*}
		Furthermore, for $ x\in H_q(\bm{k} _x),  y\in H_q(\bm{k} _y) $, $ \bm{k}_x, \bm{k}_y\in\Z^\d $, we have
		$|x - q\bm{k}_x| \leq \frac{\rho}{2}$, $|y - q\bm{k}_y| \leq \frac{\rho}{2}$,
		and 
		\begin{equation*}
		|x-y| \geq -|x - q\bm{k}_x| + q|\bm{k}_x - \bm{k}_y| - | q\bm{k}_y - y| \geq  q |\bm{k}_x - \bm{k}_y| - \rho,
		\end{equation*}
		and hence by \eqref{v1} and the monotonicity of $b_f$
		\begin{equation*}
		f(x - y) \leq  b_f(|x - y|  ) \leq b_f\Bigl( (q |\bm{k}_x - \bm{k}_y| - \rho)\vee 0 \Bigl).
		\end{equation*}

		As a result, for any $\eta\in\Gamma$,
		\begin{gather*}
		\sum _{y \in \eta} \exp\biggl(-\sum_{z \in \eta \setminus \{y\}} g(z-y)\biggr)
		f(x-y)
		\\
		=\sum_{\bm{k}\in\Z^\d}\sum _{y \in \eta\cap  H_q(\bm{k})} \exp\biggl(-\sum_{\bm{j}\in\Z^\d}\sum_{z \in (\eta\cap  H_q(\bm{j})) \setminus \{y\}} g(z-y)\biggr)f(x-y) \\
		\leq \sum_{\bm{k}\in\Z^\d}\sum _{y \in \eta\cap  H_q(\bm{k})} \exp\biggl(-\sum_{z \in (\eta\setminus \{y\})\cap  H_q(\bm{k})} g(z-y)\biggr)f(x-y)\\
		\leq
		\sum_{\substack{\bm{k}\in\Z^\d}}\sum _{y \in \eta\cap  H_q(\bm{k})} \exp\bigl(-\alpha \bigl\lvert ( \eta\setminus \{y\})\cap  H_q(\bm{k})\bigr\rvert\bigr)b_f\Bigl((q|\bm{k} -\bm{k}_x| - \rho)\vee 0\Bigr)\\
		=
		\sum_{\substack{\bm{k}\in\Z^\d}}\bigl\lvert  \eta\cap  H_q(\bm{k})\bigr\rvert  \exp\bigl(-\alpha \bigl\lvert  \eta\cap  H_q(\bm{k})\bigr\rvert\bigr) e^{\alpha} b_f\Bigl((q|\bm{k} -\bm{k}_x| - \rho)\vee 0\Bigr)\\
		\leq \frac{e^{\alpha}}{\alpha e}
		\sum_{\substack{\bm{k}\in\Z^\d}}   b_f\Bigl((q|\bm{k} -\bm{k}_x| - \rho)\vee 0\Bigr)=
		\frac{e^{-\alpha}}{\alpha e}
		\sum_{\bm{j}\in\Z^\d}   b_f\Bigl((q|\bm{j} | - \rho)\vee 0\Bigr)<\infty,
		\end{gather*}
		because of \eqref{v0}, and the bound does not depend on $x$.
		In the penultimate inequality we used that $s e^{-\alpha s} \leq \frac {1}{\alpha e}$ for $s \geq 0$.
		The lemma is proved.
	\end{proof}

To show the existence of the process (see Theorem~\ref{thm:existuniqfec} below), we will require the following assumptions.
	\begin{con}\label{con1}
		There exists $B\geq 1$, such that, for a.a. $x\in\R^\d$,
		\begin{equation}\label{dima_cond}
		a(x)\leq B G^2(x), \qquad \varphi(x)\leq  B G(x).
		\end{equation}
	\end{con}

	\begin{con}\label{varphi sep from 0}
		The function $\varphi$ is separated from $0$ in a neighborhood of the origin.
	\end{con}

	\begin{con}\label{con3}
	There exists $p\geq0$, such that $c(x)\leq p \varphi(x)$ for a.a. $x\in\R^\d$.
	\end{con}

	\begin{prop} \label{b bounded}
		Let the birth rate $b$ is given by \eqref{brate fecundity}.
Suppose that there exists $B\geq 1$ such that, for a.a. $x\in\R^\d$,
		\begin{equation*}
		a(x)\leq B G(x), \qquad \varphi(x)\leq  B G(x)
		\end{equation*}
(in particular, let Conditions~\ref{con1} hold). Suppose also that Conditions~\ref{varphi sep from 0}--\ref{con3} hold. Then $b$ is uniformly bounded, i.e. \eqref{b_is_bdd} holds.
	\end{prop}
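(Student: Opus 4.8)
The plan is to derive the proposition as a direct consequence of Lemma~\ref{lem1dot4}. The only part of the rate \eqref{brate fecundity} that could cause unboundedness is the ``enhancement'' factor $1+\sum_{z\in\eta\setminus\{y\}}c(z-y)$, which a~priori grows with the local density around $y$; but Condition~\ref{con3} makes it grow at most linearly in $s_y:=\sum_{z\in\eta\setminus\{y\}}\varphi(z-y)$, and this linear growth is killed by the factor $e^{-s_y}$. The crucial point is that after absorbing the linear factor into a constant one still has a genuine exponential weight $e^{-s_y/2}$ built from the competition kernel $\varphi$, which is precisely what Lemma~\ref{lem1dot4} needs.

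Concretely, first use Condition~\ref{con3} to get $\sum_{z\in\eta\setminus\{y\}}c(z-y)\le p\,s_y$, so that, termwise in $y$,
\[
b(x,\eta)\le\sum_{y\in\eta}a(x-y)\,(1+p\,s_y)\,e^{-s_y}.
\]
Next put $\kappa_p:=\sup_{s\ge0}(1+ps)e^{-s/2}$, which is finite since $s\mapsto(1+ps)e^{-s/2}$ is continuous on $[0,\infty)$ and vanishes at infinity; then $(1+p\,s_y)e^{-s_y}\le\kappa_p\,e^{-s_y/2}$, and hence
\[
b(x,\eta)\le\kappa_p\sum_{y\in\eta}a(x-y)\exp\Bigl(-\tfrac12\sum_{z\in\eta\setminus\{y\}}\varphi(z-y)\Bigr).
\]
Now apply Lemma~\ref{lem1dot4} with $f:=a$, $g:=\tfrac12\varphi$, $b_f(s):=B(1+s)^{-\d-\varepsilon}$, and with $\rho:=\rho_0$, $\alpha:=\tfrac12\alpha_0$, where $\rho_0,\alpha_0>0$ are given by Condition~\ref{varphi sep from 0}. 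One checks all hypotheses: $b_f$ is bounded, strictly positive, decreasing to $0$, and $\int_{\R_+}b_f(s)s^{\d-1}\,ds<\infty$ because the integrand decays like $s^{-1-\varepsilon}$; $f=a$ is bounded (as $a\le BG\le B$) and $f(x)=a(x)\le BG(x)=b_f(|x|)$; finally $g(x)=\tfrac12\varphi(x)\ge\tfrac12\alpha_0=\alpha$ for $|x|\le\rho$. Lemma~\ref{lem1dot4} then yields a finite constant
\[
C:=\sup_{x\in\R^\d,\ \eta\in\Gamma}\sum_{y\in\eta}a(x-y)\exp\Bigl(-\tfrac12\sum_{z\in\eta\setminus\{y\}}\varphi(z-y)\Bigr)<\infty,
\]
so that $\mathbf{b}\le\kappa_p\,C<\infty$, i.e. \eqref{b_is_bdd} holds.

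The proof is essentially routine once Lemma~\ref{lem1dot4} is available; the only genuinely substantive step is the bookkeeping above, namely realizing that $e^{-s_y}$ should be split as $e^{-s_y/2}\cdot e^{-s_y/2}$ so that $1+p\,s_y$ is absorbed into the constant $\kappa_p$ while an exponential in $\varphi$ survives to feed into the lemma. If instead one bounded $(1+p\,s_y)e^{-s_y}$ by a constant outright, one would be left with $\sum_{y\in\eta}a(x-y)$, which is not uniformly bounded over $\eta\in\Gamma$. Note also that only the weaker hypothesis $a\le BG$, not the bound $a\le BG^2$ of Condition~\ref{con1}, is used in this proposition.
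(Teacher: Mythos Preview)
Your proof is correct and follows essentially the same route as the paper: bound $c$ by $p\varphi$ via Condition~\ref{con3}, split $e^{-s_y}=e^{-s_y/2}e^{-s_y/2}$ so that $(1+ps_y)e^{-s_y/2}$ is absorbed into a constant (your $\kappa_p$ is the paper's $C_p$), and then apply Lemma~\ref{lem1dot4} with $f=a$, $g=\tfrac12\varphi$, $b_f(s)=B(1+s)^{-\d-\varepsilon}$. Your closing remark that only $a\le BG$ (not $a\le BG^2$) is needed here is also exactly what the paper's statement highlights.
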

	\begin{proof}
Condition~\ref{con3} implies that
\begin{align}
b(x,\eta)&\leq \sum\limits _{y \in \eta} a(x-y)
	\exp\biggl(-\frac{1}{2}\sum\limits _{z \in \eta \setminus \{y\}}
	\varphi(z-y)\biggr) \notag\\&\quad \times
\biggl(1+p\sum\limits _{z \in \eta \setminus \{y\}}
	\varphi(z-y)\biggr)
	\exp\biggl(-\frac{1}{2}\sum\limits _{z \in \eta \setminus \{y\}}
	\varphi(z-y)\biggr)\notag\\
	&\leq C_p \sum\limits _{y \in \eta} a(x-y)
	\exp\biggl(-\frac{1}{2}\sum\limits _{z \in \eta \setminus \{y\}}
	\varphi(z-y)\biggr),\label{added}
\end{align}
where
\[
C_p=\sup_{s\geq0}(1+ps)e^{-\frac{1}{2}s}=\begin{cases}
1, & 0\leq p\leq \frac{1}{2},\\
2p e^{\frac{1}{2p}-1}, & p>\frac{1}{2}.
\end{cases}
\]
By Condition~\ref{varphi sep from 0} and the first inequality in \eqref{dima_cond}, one can apply 
	Lemma \ref{lem1dot4}  with $f = a$, $g = \frac12 \varphi$, $b_f(s)=B(1+s)^{-\d-\varepsilon}$ to \eqref{added}, that yields the statement.
	\end{proof}

	\begin{thm}\label{thm:existuniqfec}
	Let  $b$ be the birth rate given by \eqref{brate fecundity} and Conditions~\ref{con1}--\ref{con3} hold. Then there exists a unique solution to \eqref{se GK} in the sense of Definition~\ref{def:sol}. If, additionally, both $b$ and $\eta_0$ are translation invariant, then $\eta_t$ is translation invariant for $t>0$.
	\end{thm}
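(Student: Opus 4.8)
The plan is to deduce Theorem~\ref{thm:existuniqfec} from the general existence and uniqueness result in Theorem~\ref{thm_exist_uniq}, so the task reduces to verifying the two hypotheses \eqref{b_is_bdd} and \eqref{main_cond_GK} for the birth rate $b$ given by \eqref{brate fecundity} under Conditions~\ref{con1}--\ref{con3}. The boundedness \eqref{b_is_bdd} is already done: Condition~\ref{con1} implies in particular that $a(x)\leq BG^2(x)\leq BG(x)$ (since $G\leq 1$) and $\varphi(x)\leq BG(x)$, so Proposition~\ref{b bounded} applies and gives $\mathbf{b}<\infty$. The only real work is establishing the Lipschitz-type bound \eqref{main_cond_GK}, namely that $\sup_{\eta\in\Gamma_G}\lvert b(x,\eta\cup y)-b(x,\eta)\rvert\leq MG(x-y)$ for some $M>0$ independent of $x,y$. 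Once both are checked, the existence and uniqueness, as well as the propagation of translation invariance, follow verbatim from Theorem~\ref{thm_exist_uniq}.

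\textbf{Verifying \eqref{main_cond_GK}.} First I would write $b(x,\eta)=\sum_{y\in\eta}a(x-y)\,r(y,\eta)$ with $r(y,\eta)=\bigl(1+\sum_{z\in\eta\setminus\{y\}}c(z-y)\bigr)\exp\bigl(-\sum_{z\in\eta\setminus\{y\}}\varphi(z-y)\bigr)$ as in \eqref{r_rate}. Adding a point $y_0$ to $\eta$ changes $b$ in two ways: (i) a new summand $a(x-y_0)\,r(y_0,\eta\cup y_0)$ appears; (ii) every existing summand has its factor $r(y,\eta)$ replaced by $r(y,\eta\cup y_0)$, which differs because the sums over $z$ now include the term corresponding to $z=y_0$. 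For contribution (i), since $0\le r(y_0,\eta\cup y_0)\le C_p$ (the same uniform bound $\sup_{s\ge0}(1+ps)e^{-s}=:C_p'<\infty$ used in Proposition~\ref{b bounded}, or the $C_p$ there after the split), and $a(x-y_0)\le BG^2(x-y_0)\le BG(x-y_0)$, this term is bounded by $BC_p'\,G(x-y_0)$, which is of the required form. For contribution (ii), I would use the elementary inequality $\lvert r(y,\eta\cup y_0)-r(y,\eta)\rvert\le K\bigl(c(y_0-y)+\varphi(y_0-y)\bigr)$: writing $r=(1+S_c)e^{-S_\varphi}$ with $S_c=\sum c$, $S_\varphi=\sum\varphi$, and noting that adding $y_0$ increases $S_\varphi$ by $\varphi(y_0-y)\ge0$ and $S_c$ by $c(y_0-y)\ge0$, one bounds the difference of products using $|e^{-S_\varphi-\varphi(y_0-y)}-e^{-S_\varphi}|\le \varphi(y_0-y)$ and the uniform boundedness of $(1+S_c)e^{-S_\varphi}$ and of $e^{-S_\varphi}$, together with Condition~\ref{con3} giving $c\le p\varphi$, so the whole bound is $\le K'\varphi(y_0-y)\le K'BG(y_0-y)$. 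Therefore
\[
\bigl\lvert b(x,\eta\cup y_0)-b(x,\eta)\bigr\rvert
\le BC_p'\,G(x-y_0)+K'B\,G(y_0-y)\sum_{y\in\eta}a(x-y)\,?\,,
\]
and here the point is that the sum over $y\in\eta$ of $a(x-y)\,G(y_0-y)$ must still be controlled uniformly.

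\textbf{The main obstacle.} The delicate step is precisely bounding $\sum_{y\in\eta}a(x-y)\bigl\lvert r(y,\eta\cup y_0)-r(y,\eta)\bigr\rvert$ uniformly in $\eta\in\Gamma_G$ by a constant times $G(x-y_0)$. Naively the per-$y$ bound $K'B\,\varphi(y_0-y)\le K'B^2\,G(y_0-y)$ summed over $y\in\eta$ is not summable without exploiting the exponential damping. The right approach is not to drop the exponential factor: one should keep a portion of $\exp(-\tfrac12 S_\varphi(y))$ attached to each term and instead estimate
\[
\sum_{y\in\eta}a(x-y)\,G(y_0-y)\,\exp\Bigl(-\tfrac12\!\!\sum_{z\in\eta\setminus\{y\}}\!\!\varphi(z-y)\Bigr),
\]
then use $a(x-y)\le BG^2(x-y)\le B\,G(x-y_0)G(y_0-y)$ by the submultiplicativity \eqref{G_ineq} (applied as $G(x-y)\le G(x-y_0)G(y_0-y)$ is false in general; rather $G(x-y_0)G(y_0-y)\le G(x-y)$, so one instead writes $a(x-y)\le BG^2(x-y)=BG(x-y)\cdot G(x-y)$ and peels one factor $G(x-y)\le 1$... ). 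This is where care is needed: the cleanest route is to bound $a(x-y)G(y_0-y)\le B\,G(x-y)G(x-y)G(y_0-y)$ and apply Lemma~\ref{lem1dot4} with $f(\cdot)=B\,G(x-\cdot)G(y_0-\cdot)$-type majorants and $g=\tfrac12\varphi$ — but $f$ now depends on $x,y_0$, so one needs the $\sup$ in Lemma~\ref{lem1dot4} to come out uniform. In practice I expect this to be handled by a lemma analogous to Lemma~\ref{lem1dot4} but producing a bound proportional to $G(x-y_0)$: namely, using $G^2(x-y)\le G(x-y_0)G(y_0-y)\cdot\tfrac{G^2(x-y)}{G(x-y_0)G(y_0-y)}$ and $G(x-y)\le G(x-y_0)G(y_0-y)$ from \eqref{G_ineq} to extract the factor $G(x-y_0)$, leaving a sum $\sum_{y\in\eta}G(y_0-y)\exp(-\tfrac12 S_\varphi(y))$ that is bounded uniformly by Lemma~\ref{lem1dot4} with $f=G$, $g=\tfrac12\varphi$, $b_f(s)=(1+s)^{-d-\varepsilon}$. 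Assembling the two contributions yields \eqref{main_cond_GK} with an explicit $M$ depending only on $B$, $p$, $d$, $\varepsilon$, and then Theorem~\ref{thm_exist_uniq} delivers existence, uniqueness, and the translation-invariance claim.
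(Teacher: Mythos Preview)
Your overall strategy is exactly the paper's: invoke Proposition~\ref{b bounded} for \eqref{b_is_bdd}, verify \eqref{main_cond_GK} by splitting $b(x,\eta\cup y_0)-b(x,\eta)$ into the new summand and the perturbation of the old ones, then appeal to Theorem~\ref{thm_exist_uniq}. The bound on the new summand $a(x-y_0)r(y_0,\eta\cup y_0)$ and the pointwise estimate $|r(y,\eta\cup y_0)-r(y,\eta)|\le \mathrm{const}\cdot\varphi(y_0-y)\,r(y,\eta)$ (keeping the factor $r$, not just a constant!) are both fine and agree with the paper.

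The gap is in the step you label ``the main obstacle'': your use of \eqref{G_ineq} is backwards. You yourself correctly note that \eqref{G_ineq} reads $G(x-y_0)G(y_0-y)\le G(x-y)$, and then in the next paragraph you invoke the false reverse inequality $G(x-y)\le G(x-y_0)G(y_0-y)$ to ``extract the factor $G(x-y_0)$''. That manipulation does not work. The correct (and simpler) move is the paper's: from Condition~\ref{con1} and \eqref{G_ineq} (with $G$ radial),
\[
a(x-y)\,\varphi(y_0-y)\le B^2\,G^2(x-y)\,G(y_0-y)=B^2\,G(x-y)\bigl[G(x-y)G(y-y_0)\bigr]\le B^2\,G(x-y)\,G(x-y_0).
\]
This peels off the single factor $G(x-y_0)$ you need; the \emph{square} on $G$ in the hypothesis $a\le BG^2$ is precisely what makes this work, and it is why Condition~\ref{con1} demands $a\le BG^2$ rather than merely $a\le BG$. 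The leftover sum is $\sum_{y\in\eta}G(x-y)\,r(y,\eta)$ (not $\sum_{y}G(y_0-y)\exp(\cdot)$ as you wrote), which is uniformly bounded by Proposition~\ref{b bounded} applied with $a$ replaced by $G$. With this correction your argument goes through and coincides with the paper's.
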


	\begin{proof}
	We apply Theorem~\ref{thm_exist_uniq}. Since Proposition~\ref{b bounded} implies \eqref{b_is_bdd}, it is enough to check that \eqref{main_cond_GK} holds. 	For  $\eta\in\Gamma_G$, $x\in\R^\d$ and $x'\in\R^\d\setminus \{x\}$, we have from \eqref{brate fecundity} and \eqref{r_rate},
	\begin{align*}
&\quad 	b(x,\eta\cup x') -b(x,\eta) \\&= \sum\limits _{y \in \eta} a(x-y)
 e^{-\varphi(x'-y)}r(y,\eta) \\&+\sum\limits _{y \in \eta} a(x-y)
c(x'-y) e^{-\varphi(x'-y)}
	\exp\biggl(-\sum\limits _{z \in \eta \setminus \{y\}}
	\varphi(z-y)\biggr)\\
	&\quad+ a(x-x')r(x',\eta)-\sum\limits _{y \in \eta} a(x-y)
r(y,\eta).
	\end{align*}
Then, by using  inequalities $\bigl\lvert e^{-\varphi}-1\bigr\rvert \leq \varphi $ and $0\leq c\leq p \varphi$, we get
	\begin{align*}
	&\quad \bigl\lvert b(x,\eta\cup x') -b(x,\eta)\bigr\rvert \\ &  \leq \sum\limits _{y \in \eta} a(x-y) \varphi(x'-y) (1+p) r(y,\eta)\\
	&\quad+ a(x-x')
\biggl(1+p\sum\limits _{z \in \eta }
	\varphi (z-x')\biggr) 	\exp\biggl(-\sum\limits _{z \in \eta }
	\varphi(z-x')\biggr).
	\end{align*}
	Next, by \eqref{dima_cond} and \eqref{G_ineq},
	\[
	a(x-y) \varphi(x'-y)\leq B^2 G^2(x-y) G(x'-y)\leq B^2 G(x-y) G(x-x').
	\]
	As a result,
	\begin{align*}
	&\quad \bigl\lvert b(x,\eta\cup x') -b(x,\eta)\bigr\rvert \\& \leq
	(1+p)B^2 G(x-x') \sum\limits _{y \in \eta} G(x-y) r(y,\eta) + B G^2(x-x') \sup_{s\geq 0}(1+ps)e^{-s}\\
	&\leq \mathrm{const}\cdot G(x-x'),
	\end{align*}
	where we used Proposition~\ref{b bounded} with $a$ replaced by $G$.

	Hence \eqref{main_cond_GK} holds, and we get the statement from Theorem~\ref{thm_exist_uniq}.
	\end{proof}

\begin{rmk}
It is straightforward to check, following the proofs above, that the statements of Proposition~\ref{b bounded} and Theorem~\ref{thm:existuniqfec} remain true if we replace $a(x-y)$ in \eqref{brate fecundity} by $a_1(x)a_2(y)a(x-y)$ with $0\leq a_1,a_2\in L^\infty(\R^\d)$.
\end{rmk}

	\section{Properties of a process with bounded birth rate}\label{sec:boundedbirth}

In this Section, we study some general properties of birth-and-death processes which are described by Definition~\ref{defBaD} and which have globally bounded birth rate $b$. Namely, let the assumptions \eqref{b_is_bdd}--\eqref{main_cond_GK} hold and then, by Theorem~\ref{thm_exist_uniq}, $\eta:\R_+\to\Gamma_G$ is the unique solution to \eqref{se GK} in the sense of Definition~\ref{def:sol}.

One example of such rate given by \eqref{brate fecundity} under Conditions~\ref{con1}--\ref{con3} was discussed in Theorem~\ref{thm:existuniqfec}. Consider another examples. 
\begin{exa}[Glauber dynamcis in continuum]\label{ex:Glauber}
Consider the rate
\[
	b_{z,\phi}(x,\eta)=z \exp\biggl(-\sum\limits _{y \in \eta \setminus \{x\}}
	\phi(x-y)\biggr),
\]
where $z>0$ and $\phi:\R^\d\to\R_+$ is such that $\phi(x)\leq B G(x)$, $x\in\R^\d$ for some $B>0$. Then the mapping \eqref{the generator} is the generator of the so-called Glauber dynamics in continuum which was actively studied in recent decades, see e.g. \cite{FKKoz2011,KL2005,FinKoKut1} and references therein. Clearly, since $\phi\geq0$, the assumption \eqref{b_is_bdd} is satisfied. Next,
\[
	\bigl\lvert b_{z,\phi}(x,\eta\cup x')-b_{z,\phi}(x,\eta)\bigr\rvert =b_{z,\phi}(x,\eta)\bigl(1-e^{-\phi(x-x')}\bigr)\leq z\phi(x-x'),
\]
and hence \eqref{main_cond_GK} holds as well.

An important particular case is when $\phi\equiv0$, i.e. both death and birth rates are constants. The corresponding process is called a \emph{Surgailis process}, cf.~\cite{Sur84}.
\end{exa}

\begin{exa}[Establishment rate]\label{ex:establishment}
Consider the rate, cf. \eqref{brate fecundity}:
	\begin{equation}\label{brate extablishment}
	b_{a,c,\phi}(x,\eta) = \sum\limits _{y \in \eta} a(x-y)
\biggl(1+\sum\limits _{z \in \eta}
	c(x-z)\biggr)
	\exp\biggl(-\sum\limits _{z \in \eta }
	\phi(x-z)\biggr)	,
	\end{equation}
where $0\leq a,c,\phi\in L^1(\R^\d)$ are such that, for $x\in\R^\d$,
\begin{equation}\label{estkernest}
a(x)\leq q \phi(x), \qquad c(x)\leq p \phi(x), \qquad \phi(x)\leq B G(x),
\end{equation}
for some $q,B>0$, $p\geq0$. Here if the system is in a state $\eta_t \in \Gamma_G$ at the time $t\in\R_+$, then each $y\in\eta_t$ may send an off-spring after exponential random time whose rate is $\langle a \rangle$. The off-spring will be sent according to the probability distribution on $\R^\d$ with the (normalized) density $\varkappa^{-1}a$. However, this off-spring may not survive because of a competition around it. The rate of surviving at $x$ is
\begin{equation}\label{survrate}
\biggl(1+\sum\limits _{z \in \eta}
	c(x-z)\biggr)
	\exp\biggl(-\sum\limits _{z \in \eta }
	\phi(x-z)\biggr).
\end{equation}

The assumptions in \eqref{estkernest} imply that
	$b_{a,c,\phi}(x,\eta)\leq g\Bigl( \sum\limits _{z \in \eta }
		\phi(x-z)\Bigr)$,
where $g(s)=qs(1+ps)e^{-s}$, $s\in\R_+$, and hence \eqref{b_is_bdd} holds. Moreover,
\begin{align*}
&\quad b_{a,c,\phi}(x,\eta\cup x')\\&=b_{a,c,\phi}(x,\eta)e^{-\phi(x-x')}\\ &\quad+a(x-x')e^{-\phi(x-x')}\biggl(1+\sum\limits _{z \in \eta}
	c(x-z)\biggr)
	\exp\biggl(-\sum\limits _{z \in \eta }
	\phi(x-z)\biggr)\\
	&\quad +a(x-x')e^{-\phi(x-x')}
	c(x-x')
	\exp\biggl(-\sum\limits _{z \in \eta }
	\phi(x-z)\biggr)\\&\quad +
	\sum\limits_{y \in \eta }  a(x-y)e^{-\phi(x-x')} 
	c(x-x')
	\exp\biggl(-\sum\limits _{z \in \eta } 
	\phi(x-z)\biggr).
\end{align*}
Then \eqref{estkernest} implies
\begin{multline*}
 \bigl\lvert b_{a,c,\phi}(x,\eta\cup x')-b_{a,c,\phi}(x,\eta)\bigr\rvert\\
\leq b_{a,c,\phi}(x,\eta)\bigl\lvert e^{-\phi(x-x')} -1\bigr\rvert
+2a(x-x')\sup_{s\in\R_+}(1+ps)e^{-s}\\ + q c(x-x') \sup_{s\in\R_+}se^{-s}
\leq B_1 G(x-x')
\end{multline*}
for some $B_1>0$, that yields \eqref{main_cond_GK}.
\end{exa}

\begin{prop}
Let \eqref{b_is_bdd}--\eqref{main_cond_GK} hold, and let
$\eta_t$ be the unique solution to \eqref{se GK} in the sense of Definition~\ref{def:sol}. Then there exists a Surgailis process $\xi_t$ with the unit death rate and the birth rate $\mathbf{b}$ such that $\xi _0 = \eta _0$ a.s. implies
		\begin{equation}\label{stoch_dom}
		\eta _t \subset \xi _t \quad\text{a.s.,} \ t>0.
		\end{equation}
		In other words,  $(\eta _t)_{t\geq0}$ is stochastically dominated by
		the Surgailis process $(\xi _t)_{t\geq0}$.
	\end{prop}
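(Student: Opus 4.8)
The plan is to construct both processes $\eta_t$ and $\xi_t$ on the \emph{same} probability space using the \emph{same} Poisson point process $\tilde N$ and the same initial configuration, so that the coupling is realized pathwise through the stochastic equation \eqref{se GK}. Concretely, I would let $\xi_t$ be the (unique, by Theorem~\ref{thm_exist_uniq} applied to the constant birth rate $\mathbf b$, which trivially satisfies \eqref{b_is_bdd}--\eqref{main_cond_GK}) solution of the analogue of \eqref{se GK} with $b(x,\eta_{s-})$ replaced by the constant $\mathbf b$, driven by the very same $\tilde N$ and $\tilde\eta_0$ as $\eta_t$. Since $b(x,\eta)\leq\mathbf b$ for all $x,\eta$, the indicator $I_{[0,b(x,\eta_{s-})]}(u)$ never exceeds $I_{[0,\mathbf b]}(u)$; thus every ``birth'' accepted in the equation for $\eta$ is also accepted in the equation for $\xi$ (with the same location $x$, the same birth time $s$ and the same lifetime mark $r$), whereas $\xi$ may additionally accept births that $\eta$ rejects. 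Deaths are governed identically by the lifetime marks $r$ attached to each born point and by $\tilde\eta_0$, so a point present in $\eta_t$ is present in $\xi_t$ with the identical death time.

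The key steps, in order, are: (i) verify that the constant rate $\mathbf b$ satisfies the hypotheses of Theorem~\ref{thm_exist_uniq}, so that a Surgailis process $\xi_t$ with $\xi_0=\eta_0$ exists and is unique; (ii) set up the common driving noise and write down the two stochastic equations on this single space; (iii) prove the pathwise inclusion $\eta_t\subset\xi_t$ for all $t>0$ almost surely. For step (iii) the natural route is a monotone iteration / Picard-type argument: one constructs $\eta$ and $\xi$ simultaneously as limits of the approximating processes $\eta^{(n)},\xi^{(n)}$ used in the Garc\'\i a--Kurtz construction (restricting births to a bounded window that is then exhausted), show by induction on the successive jump times that $\eta^{(n)}_t\subset\xi^{(n)}_t$ for every $n$, and pass to the limit. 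The induction step is transparent: at a jump time either a common point dies in both (preserving inclusion), or a birth is accepted in $\eta$ — then $u\leq b(x,\eta^{(n)}_{s-})\leq\mathbf b$ so the same birth is accepted in $\xi$ and the new point is added to both — or a birth is accepted only in $\xi$, which only enlarges $\xi_t$. Alternatively, if one prefers to avoid re-deriving the approximation scheme, one can invoke the standard comparison principle for solutions of \eqref{se GK} driven by a common $\tilde N$ with ordered birth rates; this is essentially \cite[Lemma~3.14]{GarciaKurtz} or can be cited from the thinning-based construction there.

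The main obstacle is making the pathwise comparison rigorous in the locally finite (non-finite-mass) setting: the stochastic equation \eqref{se GK} is only asserted to hold for bounded Borel sets $B$, and the solution is obtained as a limit of spatially localized approximations, so one must ensure the coupling is compatible with that limiting procedure — i.e. that the same realization of $\tilde N$ yields ordered approximants on each bounded window and that the inclusion survives the limit. Since the birth rate of the dominating process is the genuine uniform upper bound $\mathbf b$ for $b$, and since the localization exhausts $\R^\d$ monotonically, this is not a serious difficulty: the inclusion $\eta^{(n)}_t(B)\leq\xi^{(n)}_t(B)$ for all bounded $B$ and all $n$ is preserved under the monotone/dominated limits that define $\eta_t$ and $\xi_t$, and testing against all bounded $B$ is exactly what it means for the configuration $\eta_t$ to be a sub-configuration of $\xi_t$. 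I would therefore present step (iii) via the induction on jump times of the localized processes, and remark that uniqueness from Theorem~\ref{thm_exist_uniq} guarantees the coupled solutions coincide in law with the originally given ones, which is all that is needed for the stated stochastic domination.
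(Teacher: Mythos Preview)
Your coupling setup is exactly the paper's: define $\xi_t$ as the unique solution of \eqref{se GK} with the constant rate $\mathbf b$ in place of $b(x,\eta_{s-})$, driven by the very same $\tilde N$ and $\tilde\eta_0$. Where you diverge is in step~(iii). You propose a Picard-type iteration with induction on jump times of localized approximants, and worry about compatibility of the inclusion with the limiting procedure. The paper bypasses all of this: once both $\eta_t$ and $\xi_t$ are known to be solutions of their respective stochastic equations (which is already given by Theorem~\ref{thm_exist_uniq}), the inclusion is read off directly from the equations at a fixed time $t$. Namely, if $x\in\eta_t\setminus\eta_0$ then by \eqref{se GK} there is a point $(s,x,u,r)\in\tilde N$ with $u\le b(x,\eta_{s-})$ and $r>t-s$; since $b(x,\eta_{s-})\le\mathbf b$, the same Poisson point witnesses $x\in\xi_t$ via the equation for $\xi$. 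If $x\in\eta_t\cap\eta_0$ then the survival condition $r>t$ in $\tilde\eta_0$ is identical for both processes, so $x\in\xi_t$ as well. No approximation scheme, no induction, no passage to limits is needed: the comparison is a one-line consequence of the ordered indicators in the already-established stochastic equations. Your route would work, but it reproves machinery that the existence theorem has already delivered.
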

\begin{proof}
The process $\xi _t$ with the unit death rate and the constant birth rate $b(x,\eta)\equiv\mathbf{b}$ evidently satisfies the assumptions \eqref{b_is_bdd}--\eqref{main_cond_GK}, and hence, by Theorem~\ref{thm_exist_uniq}, $\xi _t$ is the unique solution to
	\begin{equation} \label{Surg}
	\begin{aligned}
	\xi _t (B) = \int\limits _{(0,t] \times B \times \R_+^2}
	& I _{ [0,\mathbf{b}] } (u) I \{ r> t - s \}
	\tilde N(ds,dx,du,dr) \\
	+ \int\limits _{B \times \R_+ } &
	I \{ r > t \} \tilde {\eta} _0 (dx, d r ).
	\end{aligned}
	\end{equation}

	Fix some $t>0$. Then, by \eqref{se GK}, a.s. for an $x \in \eta _t \setminus \eta _0$ there exist $s,u, r \in (0,t] \times \R _+^2$ such that
	\begin{equation}\label{icky}
	(s,x,u ,r) \in \tilde N, \ \ u \leq b(x, \eta _{s-}), \ \ r > t -s.
	\end{equation}
	Since $ b(x, \eta _{s-}) \leq \mathbf{b}$,
	 \eqref{Surg} and \eqref{icky} imply
	$x \in \xi _t$. Similarly, it follows from \eqref{se GK} and \eqref{Surg}
	that if $x \in \eta _t \cap \eta _0$, then also $x \in \xi _t \cap \eta _0$.
	 Therefore, \eqref{stoch_dom} holds.
\end{proof}

\begin{cor}
		There exists $C > 0 $ such that, for a bounded $B \in\mathscr{B}(\R ^\d)$,
		\begin{equation}\label{subvol}
		\EE{ |\eta _t \cap B| } \leq C \, \mathrm{vol} (B)
		\end{equation}
for all $t>0$, provided that \eqref{subvol} holds for $t=0$.
\end{cor}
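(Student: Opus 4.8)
The plan is to deduce the corollary directly from the stochastic domination established in the preceding proposition, reducing everything to a computation for the Surgailis process $\xi_t$. First I would invoke the proposition: coupling $\eta_t$ and a Surgailis process $\xi_t$ with unit death rate and constant birth rate $\mathbf{b}$ on the same probability space with $\xi_0=\eta_0$ a.s., we get $\eta_t\subset\xi_t$ a.s. for $t>0$, hence $|\eta_t\cap B|\leq|\xi_t\cap B|$ a.s., and therefore $\EE{|\eta_t\cap B|}\leq\EE{|\xi_t\cap B|}$. Thus it suffices to prove the bound \eqref{subvol} for the Surgailis process, under the assumption that it holds at $t=0$ (which transfers from $\eta_0$ since $\xi_0=\eta_0$).

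Next I would compute $\EE{|\xi_t\cap B|}$ explicitly from the stochastic equation \eqref{Surg}. Taking expectations and using that $\tilde N$ has mean measure $ds\,dx\,du\,e^{-r}\,dr$ while $\tilde\eta_0$ attaches independent unit exponentials to the points of $\eta_0$, one obtains
\begin{align*}
\EE{\xi_t(B)} &= \int_{(0,t]\times B\times\R_+^2} I_{[0,\mathbf{b}]}(u)\,I\{r>t-s\}\,ds\,dx\,du\,e^{-r}\,dr + \EE{\int_{B\times\R_+} I\{r>t\}\,\tilde\eta_0(dx,dr)}\\
&= \mathbf{b}\,\mathrm{vol}(B)\int_0^t e^{-(t-s)}\,ds + e^{-t}\,\EE{|\eta_0\cap B|}\\
&= \mathbf{b}\,\mathrm{vol}(B)\bigl(1-e^{-t}\bigr) + e^{-t}\,\EE{|\eta_0\cap B|}.
\end{align*}
Using the hypothesis $\EE{|\eta_0\cap B|}\leq C_0\,\mathrm{vol}(B)$ for $t=0$ with some constant $C_0>0$, this gives $\EE{\xi_t(B)}\leq\bigl(\mathbf{b}+C_0\bigr)\mathrm{vol}(B)$ for all $t>0$, so the corollary holds with $C:=\mathbf{b}+C_0$.

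The computation is essentially routine; the only points requiring a modicum of care are the justification of Fubini/Campbell-type manipulations when taking expectations against the Poisson mean measure in \eqref{Surg} — but these are harmless since all integrands are nonnegative — and the observation that the coupling must keep $\xi_0=\eta_0$ so that the $t=0$ hypothesis for $\eta_0$ is exactly the $t=0$ hypothesis for $\xi_0$. I do not anticipate a genuine obstacle here: the real work was already done in the stochastic-domination proposition, and what remains is a one-line first-moment computation for the Surgailis (constant-rate) process, which is a time-homogeneous $M/M/\infty$-type bookkeeping of births minus deaths in the region $B$.
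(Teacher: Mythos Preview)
Your proposal is correct and follows essentially the same approach as the paper: invoke the stochastic domination $\eta_t\subset\xi_t$ from the preceding proposition, then compute $\EE{\xi_t(B)}$ directly from \eqref{Surg} to obtain $\mathbf{b}\,\mathrm{vol}(B)(1-e^{-t})+e^{-t}\,\EE{|\eta_0\cap B|}$. The only cosmetic difference is that the paper bounds this convex combination by $\max\{\mathbf{b}\,\mathrm{vol}(B),\EE{|\eta_0\cap B|}\}$ rather than your sum $(\mathbf{b}+C_0)\,\mathrm{vol}(B)$, yielding the slightly sharper constant $C=\max\{\mathbf{b},C_0\}$.
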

\begin{proof}
Indeed, by \eqref{Surg},
\begin{align*}
\EE{|\xi_t\cap B|}& \leq \mathrm{vol} (B) \mathbf{b} \int_0^t\int_{t-s}^\infty e^{-r}dr ds+ \EE{|\eta_0\cap B|}\int_t^\infty e^{-r}dr\\
&=\mathrm{vol} (B)\mathbf{b}(1-e^{-t})+ \EE{|\eta_0\cap B|}e^{-t}\\&\leq \max\bigr\{\mathrm{vol} (B)\mathbf{b}, \EE{|\eta_0\cap B|}\bigr\},
\end{align*}
that implies the statement because of \eqref{stoch_dom}.
\end{proof}

For a function $0\leq f\in L^1(\R^\d)$, we will use the notation
\[
	\langle f\rangle:=\int_{\R^\d} f(x)dx<\infty.
\]

		Let a non-increasing function $K: (0,\infty) \to (0,\infty)$ be such that
		\begin{gather*}
			\lim\limits _{q \to 0+} K(q) = \infty, \\
			\int\limits_{r}^\infty K(q)q^{\d-1} dq < \infty, \quad r>0,
		\end{gather*}
		and let functions $\phi,h:\R^\d\to(0,\infty)$  be such that
		\begin{gather}
			C_1:=\sup_{x\in\R^\d }\int\limits _{\R ^\d} \phi(y)K(|x-y|) dy <\infty,\label{profligate}\\
			2C_1 \b \phi (x) \leq h(x)\leq G(x), \quad x\in\R^\d.\label{bland}
		\end{gather}
We will also assume $\phi$ and $h$ are separated from $0$ on each compact subset of $\R^\d$.

We define
	\[
	\begin{aligned}[2]
	\psi(x,y) &= \phi(x) \phi(y) K(|x-y|), &&\quad x, y \in \R ^ \d, x\neq y,\\
	V(\eta) &= \sum\limits _{\{x,y\}\subset \eta} \psi (x,y). &&\quad \eta\in\Gamma,
	\end{aligned}
	\]
and consider the mapping
	\begin{equation}\label{defW}
	\Gamma \ni \eta \mapsto W(\eta) := \langle h, \eta \rangle + V(\eta) \in
	[0,\infty],
	\end{equation}
where, we recall, $\langle h, \eta \rangle =\sum_{x\in\eta} h(x)$. 
The assumption \eqref{bland} implies that $\langle h,\eta \rangle<\infty$ for all $\eta\in\Gamma_G$. We set
\[
	\Theta:=\Theta_G:= \{\eta \in \Gamma_G : V(\eta) < \infty\}
\]

We are going to show now that if $\E  W (\eta _{0}) < \infty$ (and hence a.s. $\eta_0\in \Theta$) then a.s. $\eta_t\in \Theta$ for all $t\geq0$.

 \begin{prop}\label{propnew}
 Suppose that $\E W(\eta_0)<\infty$. Then $\E W(\eta_t)<\infty$, $t>0$. 
 \end{prop}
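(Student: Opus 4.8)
The plan is to use the Dynkin/martingale formula associated to the stochastic equation \eqref{se GK} applied to the functional $W$, and then control the drift term $LW(\eta)$ from above by a quantity that is itself comparable to $W(\eta)$, so that a Gronwall argument closes. First I would compute, at least heuristically from \eqref{the generator}, that
\[
LW(\eta) = -\langle h,\eta\rangle - \sum_{x\in\eta}\sum_{y\in\eta\setminus\{x\}}\psi(x,y) + \int_{\R^\d} b(x,\eta)\Bigl(h(x) + \sum_{y\in\eta}\psi(x,y)\Bigr)\,dx,
\]
the first two (negative) terms coming from the death part acting on $\langle h,\eta\rangle$ and on $V(\eta)$ respectively, and the last term from the birth part. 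Using $b(x,\eta)\leq\mathbf b$, $h(x)\leq G(x)$, and $\langle G\rangle<\infty$, the contribution $\int b(x,\eta)h(x)\,dx\leq \mathbf b\langle G\rangle$ is a finite constant. For the remaining birth term, $\int b(x,\eta)\sum_{y\in\eta}\psi(x,y)\,dx\leq \mathbf b\sum_{y\in\eta}\phi(y)\int_{\R^\d}\phi(x)K(|x-y|)\,dx\leq \mathbf b\,C_1\sum_{y\in\eta}\phi(y)$ by \eqref{profligate}, and by \eqref{bland} this is bounded by $\tfrac12\sum_{y\in\eta}h(y)\le\tfrac12\langle h,\eta\rangle$. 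Hence $LW(\eta)\leq -\tfrac12\langle h,\eta\rangle - \sum_{x\in\eta}\sum_{y\in\eta\setminus\{x\}}\psi(x,y) + \mathbf b\langle G\rangle\leq \mathbf b\langle G\rangle$, so the drift is bounded above by a constant; in particular $\E W(\eta_t)\leq \E W(\eta_0) + t\,\mathbf b\langle G\rangle<\infty$ for all $t>0$.

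The steps in order: (i) justify that $W$, though unbounded, can be inserted into the expectation version of the Dynkin formula for the solution of \eqref{se GK} — this is done by first working with the truncated functionals $W_N(\eta):=\langle h_N,\eta\rangle + V_N(\eta)$ where $h_N:=h\wedge N$ and $\psi_N:=\psi\wedge N$ (or by spatially truncating to a ball $B_N$ and using the local construction in \cite{GarciaKurtz}), for which all terms are bounded and the martingale identity $\E W_N(\eta_t) = \E W_N(\eta_0) + \E\int_0^t L_N W_N(\eta_s)\,ds$ holds rigorously; (ii) verify the same one-sided bound $L_N W_N(\eta)\leq \mathbf b\langle G\rangle$ uniformly in $N$ — crucially the negative death terms only help and the positive birth terms are dominated exactly as above since $h_N\le h$, $\psi_N\le\psi$; (iii) conclude $\E W_N(\eta_t)\leq \E W_N(\eta_0)+t\,\mathbf b\langle G\rangle\leq \E W(\eta_0)+t\,\mathbf b\langle G\rangle$; (iv) let $N\to\infty$ and apply monotone convergence ($W_N\uparrow W$) to obtain $\E W(\eta_t)\leq \E W(\eta_0)+t\,\mathbf b\langle G\rangle<\infty$.

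The main obstacle is step (i)–(ii): making the Dynkin formula legitimate for the genuinely unbounded functional $W$ on the infinite-volume process, i.e. controlling the second moments / integrability needed to apply the stochastic calculus for the Poisson-driven equation \eqref{se GK}, and checking that the limiting procedure does not lose the drift bound. The stochastic-domination corollary already gives $\E|\eta_t\cap B|\leq C\,\mathrm{vol}(B)$, which handles the $\langle h,\eta_t\rangle$ part directly (since $h\le G$ and $\langle G\rangle<\infty$, one even gets $\E\langle h,\eta_t\rangle\le C\langle G\rangle$ outright), so the real work is the quadratic term $V(\eta_t)$; there the truncation plus the sign of the death contribution is what saves the argument, and one must be careful that in $L_NW_N$ the death term acting on $V_N$ produces $-\sum_{x\in\eta}\sum_{y\in\eta\setminus\{x\}}\psi_N(x,y)\le 0$ and the birth term produces at most $\mathbf b C_1\sum_{y\in\eta}\phi(y)\le\tfrac12\langle h,\eta\rangle$, both uniformly in $N$, before passing to the limit.
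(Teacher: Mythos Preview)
Your drift computation is correct and coincides with what the paper proves later as Lemma~\ref{walk out on}: $LW(\eta)\le\mathbf b\langle h\rangle-\tfrac12 W(\eta)\le\mathbf b\langle h\rangle$. But your route to the proposition is genuinely different from the paper's, and one of your two truncations does not work.

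The paper does \emph{not} touch the generator here. It argues by pure stochastic domination: from $\eta_t\subset\xi_t\subset\eta_0\cup\Pi_t$, where $\Pi_t$ is the Poisson point process on $\R^\d$ with intensity $\mathbf b\,t$ obtained from $\tilde N$ by ignoring deaths (equation~\eqref{ppp}), one gets $W(\eta_t)\le W(\eta_0)+W(\Pi_t)+\sum_{x\in\eta_0,\,y\in\Pi_t}\psi(x,y)$. Each piece has finite expectation by a direct Slivnyak--Mecke computation together with \eqref{profligate}--\eqref{bland}, and $\eta_0$, $\Pi_t$ are independent. This is elementary and gives the explicit bound~\eqref{usedlatter}, which the paper reuses in Corollary~\ref{slink}. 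In the paper's logic, Proposition~\ref{propnew} is a \emph{prerequisite} for the martingale property (Proposition~\ref{it is a martingale}), not a consequence of it; you are reversing that order.

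Your Dynkin-plus-truncation route can be made to work, but the truncation $h_N=h\wedge N$, $\psi_N=\psi\wedge N$ is useless: $V_N(\eta)=\sum_{\{x,y\}\subset\eta}(\psi\wedge N)(x,y)$ can still be infinite for $\eta\in\Gamma_G\setminus\Theta$ (take pairs of points at positions $n$ and $n+a_n$ with $a_n\to0$ fast enough; infinitely many pairs contribute $\psi\ge N$), so $W_N$ is not even finite on $\Gamma_G$ and no martingale identity is available. The spatial truncation $W^{(N)}(\eta):=W(\eta\cap B_N)$, which you mention as an alternative, is the right object. Even then $W^{(N)}$ is unbounded on $\Gamma_G$ (since $K$ blows up at $0$), so the identity $\E W^{(N)}(\eta_t)=\E W^{(N)}(\eta_0)+\E\int_0^t LW^{(N)}(\eta_s)\,ds$ is not immediate: you need a localising sequence $\sigma_k\uparrow\infty$ for the local martingale, then Fatou on $\E W^{(N)}(\eta_{t\wedge\sigma_k})\le\E W^{(N)}(\eta_0)+t\mathbf b\langle h\rangle$, then monotone convergence in $N$. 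This works and yields $\E W(\eta_t)\le\E W(\eta_0)+t\mathbf b\langle h\rangle$, but the ``main obstacle'' you identified is real and is precisely what the paper's domination argument sidesteps.
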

 \begin{proof}
Consider the Poisson process $(\Pi _t)_{t\geq0}$ defined by
	\begin{equation} \label{ppp}
	\begin{aligned}
	\Pi _t (B) = \int\limits _{(0,t] \times B \times \R_+^2}
	I _{ [0,\mathbf{b}] } (u)
	\tilde N(ds,dx,du,dr) .
	\end{aligned}
	\end{equation}
By \eqref{Surg} and \eqref{ppp}, a.s.
\begin{alignat}{2}
\xi _t \setminus \eta_0 &\subset \Pi _t, &&\quad t\geq0,\label{incl1}\\
\Pi _s &\subset \Pi _t, &&\quad 0\leq s \leq t.\label{incl2}
\end{alignat}
Next, for each $t\geq0$, $\Pi _t$ is a Poisson point process (a.k.a. Poisson random point field or Poisson random measure) on $\R ^\d$ with the intensity $\mathbf{b} t$.  Then, by \eqref{bland} and the Slivnyak--Mecke theorem,
	\[
	 \E \langle h, \Pi_t \rangle \leq \E \langle G, \Pi_t \rangle  = \mathbf{b} t \langle G\rangle < \infty, \quad t\geq0.
	\]
In particular, a.a. realisations of $\Pi_t$ lie in $\Gamma_G$ for $t\geq0$.
Similarly, for $t\geq0$,
	\[
	 \E V (\Pi_t) = (\mathbf{b} t)^2\int\limits _{\R ^\d} \int\limits _{\R ^\d} \phi (x) \phi (y) K(|x-y|) dx dy\leq C_1 \mathbf{b}^2 t^2\langle\phi\rangle ,
	\]
	where we used \eqref{profligate}. As a result, 
\begin{equation}\label{expWPifinite}
\E W (\Pi_t)<\infty, \quad t\geq0,
\end{equation}
and hence $\Pi_t\in\Theta$ a.s. for $t\geq0$. 

Next, by \eqref{incl1} and \eqref{stoch_dom}, we have  
\[
	\eta_t\subset\eta_0\cup\Pi_t, \quad t>0,
\]
and hence, by \eqref{defW},
\[
	W(\eta_t)\leq W(\eta_0\cup\Pi_t)=W(\eta_0)+W(\Pi_t)+\sum_{\substack{x\in\eta_0\\y\in\Pi_t}}\psi(x,y).
\]
Since $\eta_0$ and $\Pi_t$ are independent, we get, for all $t>0$,
\begin{align}
\E W(\eta_t)&\leq \E W(\eta_0)+\E W(\Pi_t)+\E \E\biggl[\sum_{\substack{x\in\eta_0\\y\in\Pi_t}}\psi(x,y) \biggm\vert \eta_0\biggr]\notag
\\&=\E W(\eta_0)+\E W(\Pi_t)+\mathbf{b}t \E \sum_{x\in\eta_0}\int_{\R^\d}\psi(x,y)dy \notag\\& \leq \E W(\eta_0)+\E W(\Pi_t)+C_1 \mathbf{b}t \E \sum_{x\in\eta_0}\phi(x)\notag\\& \leq \E W(\eta_0)+\E W(\Pi_t)+\frac{t}{2} \E W(\eta_0)<\infty,\label{usedlatter} 
\end{align}
where we used again the Slivnyak--Mecke theorem and also \eqref{profligate}, \eqref{bland}, \eqref{expWPifinite}.
\end{proof}

\begin{rmk}
We will show below (see Theorem~\ref{prop1}) a more stronger statement, namely, that $\limsup\limits _{t \to \infty} \E W(\eta _t)$ is finite. To this end, one needs to justify further properties of the process $(\eta_t)_{t\geq0}$.
\end{rmk}

\begin{lem}\label{lemabsLW}
For each  $\eta \in \Theta$,
	\begin{equation*}
		\bigl\lvert L W (\eta)\bigr\rvert\leq  \textbf{b}  \langle h\rangle+2 W(\eta)<\infty.
	\end{equation*}
\end{lem}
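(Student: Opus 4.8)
The plan is to compute $LW(\eta)$ explicitly from the generator \eqref{the generator} applied to $W = \langle h,\cdot\rangle + V$, splitting into the death part and the birth part, and then bound each piece using only the structural assumptions \eqref{profligate} and \eqref{bland}. First I would write
\[
LW(\eta) = \sum_{x\in\eta}\bigl(W(\eta\setminus x) - W(\eta)\bigr) + \int_{\R^\d} b(x,\eta)\bigl(W(\eta\cup x) - W(\eta)\bigr)\,dx,
\]
and note the additivity-type identities: $\langle h,\eta\setminus x\rangle - \langle h,\eta\rangle = -h(x)$, and $V(\eta\setminus x) - V(\eta) = -\sum_{y\in\eta\setminus x}\psi(x,y)$, while on the birth side $\langle h,\eta\cup x\rangle - \langle h,\eta\rangle = h(x)$ and $V(\eta\cup x) - V(\eta) = \sum_{y\in\eta}\psi(x,y)$. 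These are valid for $\eta\in\Theta$ since then all the sums converge absolutely.

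Next I would estimate the death part: $\bigl|\sum_{x\in\eta}(-h(x) - \sum_{y\in\eta\setminus x}\psi(x,y))\bigr| \leq \langle h,\eta\rangle + 2V(\eta)$ — the factor $2$ because $\sum_{x\in\eta}\sum_{y\in\eta\setminus x}\psi(x,y) = 2V(\eta)$ by symmetry of $\psi$ — so this contributes at most $2W(\eta)$ in absolute value. For the birth part, I would use $b(x,\eta)\leq \mathbf{b}$ and bound
\[
\Bigl|\int_{\R^\d} b(x,\eta)\bigl(h(x) + \textstyle\sum_{y\in\eta}\psi(x,y)\bigr)\,dx\Bigr| \leq \mathbf{b}\int_{\R^\d} h(x)\,dx + \mathbf{b}\int_{\R^\d}\sum_{y\in\eta}\phi(x)\phi(y)K(|x-y|)\,dx.
\]
The first term is $\leq \mathbf{b}\langle G\rangle$ by \eqref{bland}; writing $\langle h\rangle$ in the statement presumably abbreviates this (or uses $h\leq G$ directly). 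For the second term I swap sum and integral and apply \eqref{profligate}: $\int_{\R^\d}\phi(x)K(|x-y|)\,dx \leq C_1$ for each $y$, giving $\mathbf{b} C_1\sum_{y\in\eta}\phi(y)$, which by \eqref{bland} ($2C_1\mathbf{b}\phi(y)\leq h(y)$) is at most $\tfrac12\sum_{y\in\eta}h(y) = \tfrac12\langle h,\eta\rangle \leq \tfrac12 W(\eta)$. Collecting: $|LW(\eta)| \leq 2W(\eta) + \mathbf{b}\langle h\rangle + \tfrac12 W(\eta)$; the stated bound $\mathbf{b}\langle h\rangle + 2W(\eta)$ then follows after one checks the numerology (it is plausible the paper's death-part bound is sharper, e.g. the $V$-contribution cancels in sign against part of the birth contribution, or the constants are simply being stated loosely — in any case the method gives a bound of exactly this form).

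The main obstacle I anticipate is not any single estimate but the bookkeeping of signs and convergence: one must verify that for $\eta\in\Theta$ every rearrangement (splitting $V(\eta\cup x)-V(\eta)$, interchanging $\sum_{y\in\eta}$ with $\int dx$ in the birth integral) is justified by absolute convergence, which is exactly what $V(\eta)<\infty$ together with \eqref{profligate} guarantees via Tonelli. A secondary subtlety is making sure the birth-integral term $\int b(x,\eta)\sum_{y\in\eta}\psi(x,y)\,dx$ is finite for \emph{every} $\eta\in\Theta$ and not merely almost every one — but this again reduces to the pointwise bound $b\leq\mathbf{b}$ plus \eqref{profligate} and \eqref{bland}, so no extra hypothesis is needed. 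Once the identities and the interchange are in place, the inequality is immediate.
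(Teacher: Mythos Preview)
Your approach is essentially identical to the paper's: compute the birth and death increments of $W$ explicitly, bound the birth part using $b\leq\mathbf{b}$ together with \eqref{profligate} and \eqref{bland}, and note the death part equals $\langle h,\eta\rangle + 2V(\eta)$ exactly.

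The only slip is bookkeeping. You prematurely bound the death part by $2W(\eta)$, throwing away a spare $\langle h,\eta\rangle$, and then end up with $\mathbf{b}\langle h\rangle + \tfrac52 W(\eta)$. To recover the stated constant, simply keep the death part as the exact expression $\langle h,\eta\rangle + 2V(\eta)$ and combine it with your birth bound $\mathbf{b}\langle h\rangle + \tfrac12\langle h,\eta\rangle$ (or the paper's cruder $\mathbf{b}\langle h\rangle + \langle h,\eta\rangle$, using only $C_1\mathbf{b}\phi\leq h$). Either way the sum is at most
\[
\mathbf{b}\langle h\rangle + 2\langle h,\eta\rangle + 2V(\eta) = \mathbf{b}\langle h\rangle + 2W(\eta).
\]
No cancellation of signs or sharper death estimate is needed; you just rounded too early.
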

\begin{proof}
Using the equality
	\begin{equation}\label{twinge1}
	W(\eta \cup x) - W(\eta) =
	h(x) + \sum\limits _{y \in \eta} \psi(x,y), \ \ \ \eta \in \Theta, x \notin \eta,
	\end{equation}
and
	\eqref{b_is_bdd}, \eqref{profligate}, \eqref{bland}, we get, for all $\eta\in\Gamma_G$,
	\begin{align}
	&\quad \biggl\lvert\int\limits _{\R^\d} b(x, \eta) \bigl(W(\eta \cup {x}) -W(\eta)\bigr) dx\biggr\rvert \notag
	\\
	&\leq
	\textbf{b} \langle h\rangle   + \textbf{b} \sum\limits _{y \in \eta} \phi (y) \int\limits _{\R ^\d} \phi(x)K(|x-y|) dx\notag
	\\ &\leq
	\textbf{b}  \langle h\rangle + \b C_1 \sum\limits _{y \in \eta} \phi  (y)  \leq \textbf{b}  \langle h\rangle  + \sum\limits _{y \in \eta} h (y).\label{expiate}
	\end{align}
Next, using the equality
\begin{equation}\label{twinge11}
W(\eta \setminus {x}) - W(\eta)=-h(x)-\sum_{y\in\eta\setminus x} \psi(x,y), \quad \eta\in\Theta, \ x\in\eta,
\end{equation}
we have, for all $\eta\in\Theta$,
	\begin{equation}\label{expiate1}
	\biggl\lvert \sum\limits _{x \in \eta}  (W(\eta \setminus {x}) - W(\eta))\biggr\rvert
= \sum\limits _{x \in \eta} h (x) + 2 \sum\limits _{\{x,y\} \subset \eta} \psi(x,y).
	\end{equation}
Combining \eqref{expiate} with \eqref{expiate1}, we get the statement.
\end{proof}
\begin{cor}\label{slink}
		Assume that $\E W(\eta _0) < \infty$.
		Then 
		\begin{equation}\label{eq:Iamformal}
		\E \int\limits_0^t \bigl \lvert LW (\eta _{s-})\bigr\rvert ds < \infty, \quad t\geq0.
		\end{equation}
\end{cor}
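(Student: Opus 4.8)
The plan is to combine the pointwise bound from Lemma~\ref{lemabsLW} with the expectation control on $W(\eta_t)$ obtained (and refined) in the proof of Proposition~\ref{propnew}. By Lemma~\ref{lemabsLW}, for every $\eta\in\Theta$ we have $\lvert LW(\eta)\rvert \leq \mathbf{b}\langle h\rangle + 2W(\eta)$, so integrating and taking expectations,
\[
\E\int_0^t \bigl\lvert LW(\eta_{s-})\bigr\rvert\,ds \leq \mathbf{b}\langle h\rangle\, t + 2\int_0^t \E W(\eta_{s-})\,ds,
\]
provided the right-hand side is finite and the interchange of $\E$ and $\int$ is legitimate (both follow once we show $s\mapsto \E W(\eta_s)$ is locally bounded, via Tonelli applied to the nonnegative integrand). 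Thus the corollary reduces to the claim that $\sup_{s\in[0,t]}\E W(\eta_s)<\infty$ for each $t\geq0$.

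First I would observe that the estimate already carried out in the proof of Proposition~\ref{propnew} does exactly this, uniformly in $s\leq t$. Indeed, from $\eta_s\subset\eta_0\cup\Pi_s$ (which holds for the fixed coupling of $\eta$ and $\Pi$ with $\Pi_s$ monotone in $s$ by \eqref{incl1}, \eqref{incl2}, \eqref{stoch_dom}) and the decomposition of $W$ on a union, one gets
\[
\E W(\eta_s) \leq \E W(\eta_0) + \E W(\Pi_s) + \tfrac{s}{2}\,\E W(\eta_0),
\]
and since $\E W(\Pi_s)\leq \mathbf{b}s\langle G\rangle + C_1\mathbf{b}^2 s^2\langle\phi\rangle$ is non-decreasing in $s$, we obtain $\E W(\eta_s)\leq (1+\tfrac{t}{2})\E W(\eta_0) + \mathbf{b}t\langle G\rangle + C_1\mathbf{b}^2 t^2\langle\phi\rangle$ for all $s\in[0,t]$. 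This bound is finite because $\E W(\eta_0)<\infty$ by hypothesis. Substituting into the displayed inequality for $\E\int_0^t\lvert LW(\eta_{s-})\rvert\,ds$ yields \eqref{eq:Iamformal}.

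The only genuinely delicate point is measurability/integrability bookkeeping: one must know that $\eta_{s-}\in\Theta$ a.s.\ (so that $LW(\eta_{s-})$ is well-defined), that $s\mapsto W(\eta_{s-})$ is a bona fide measurable process, and that replacing $\eta_{s-}$ by $\eta_s$ inside the time integral is harmless — the latter because the two differ only at the (countably many, a.s.) jump times of $\eta$, a Lebesgue-null set of times. The condition $\eta_s\in\Theta$ a.s.\ for all $s$ follows from the same domination $\eta_s\subset\eta_0\cup\Pi_s$ together with $\Pi_s\in\Theta$ a.s.\ (shown via \eqref{expWPifinite}) and $\eta_0\in\Theta$ a.s.; finiteness of the cross term $\sum_{x\in\eta_0,\,y\in\Pi_s}\psi(x,y)$ was already established in \eqref{usedlatter}. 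With these routine facts in hand, Tonelli's theorem justifies all interchanges since the integrand is nonnegative, and the proof is complete.
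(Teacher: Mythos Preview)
Your proof is correct and follows essentially the same route as the paper: apply Lemma~\ref{lemabsLW} to reduce \eqref{eq:Iamformal} to finiteness of $\E\int_0^t W(\eta_{s-})\,ds$, then invoke the estimate \eqref{usedlatter} from the proof of Proposition~\ref{propnew} together with the monotonicity \eqref{incl2} and the bound \eqref{expWPifinite}. The paper integrates \eqref{usedlatter} directly rather than first passing to a supremum, but this is a cosmetic difference; your additional remarks on measurability and the harmless replacement of $\eta_{s-}$ by $\eta_s$ are welcome bookkeeping.
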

\begin{proof}
	By Proposition~\ref{propnew}, a.s. $\eta_t\in\Theta$ for all $t>0$. Then, by Lemma~\ref{lemabsLW}, to prove \eqref{eq:Iamformal}, it is enough to show that 
	$\E \int\limits_0^t W (\eta _{s-}) ds $ is finite. The latter expression is estimated, because of \eqref{usedlatter}, by
\begin{align*}
\Bigl(t+\frac{t^2}{4}\Bigr)\E W(\eta_0) + \E \int\limits_0^t  W (\Pi _{s-}) ds\leq \Bigl(t+\frac{t^2}{4}\Bigr)\E W(\eta_0) + t\,\E W (\Pi _{t})<\infty,
\end{align*}
where we used \eqref{incl2} and \eqref{expWPifinite}.
\end{proof}

Define $N _b$ as the projection of $\tilde N$ on first, second, and fourth coordinates. Then, in particular, 
\begin{equation*}
N_b (\{(x,s,u)\}) = 1       \Longleftrightarrow \exists r >0 : \tilde N(\{(x,s,r,u)\}) = 1.
\end{equation*}
Since $\int\limits _{\R _+} e^{-r}dr = 1$, $N _b$ is a Poisson point process on $\R_+ \times \R ^\d \times \R _+$
with mean measure $ds \times dx \times du$.

 Let $\mathscr{B}_b(\R^\d)$ denote the set of all bounded Borel subsets of $\R^\d$.
 Define, for $t \geq 0$,
	\begin{multline*}
	\mathscr{F}_t ^o := \sigma\Bigl( N_b(A \times U) , \eta _{s}(B) \Bigm\vert A \in \mathscr{B}((0,t]),
	\\  U \in \mathscr{B}(\R ^\d \times \R _+ ), 
	s \in [0,t]\cap \mathbb{Q}, B \in \mathscr{B}_b(\R ^\d)  \Bigr),
	\end{multline*}
	and let  $\mathscr{F}_t$ be the completion of $\mathscr{F}_t ^o $ under $\P$.
	Then $ N_b$ is compatible with~$\mathscr{F}_t$.

  For $B \in \mathscr{B}_b(\R^\d)$, define $D_t(B)$ 
as the number of deaths that occured in $B$ up to time $t >0$, i.e.
\[
D_t(B):= \Bigl\lvert\bigl\{s \in (0,t] \bigm\vert |(\eta_{s-}\setminus \eta_{s} )\cap B| = 1  \bigr\}\Bigr\rvert.
\]
Then, for a fixed $B \in \mathscr{B}_b(\R ^\d)$, $(D_t(B))_{t\geq 0 }$
is an a.s. finite increasing $\{\mathscr{F}_t, t \geq 0 \}$-adapted process, 
in particular, it is a sub-martingale. Moreover,
\begin{equation}\label{eqqwwrq324}
D_t(B)\leq \eta _0 (B) + \Pi _t(B).
\end{equation}

In the sequel, we will need the assumption that $\E \eta _0 (B)<\infty$ for all $B\in\mathscr{B}_b(\R^\d)$. Note that if $\E W(\eta _0)<\infty$ then the former inequality always holds as we assumed that $h$ is separated from $0$ on each compact set, and hence $\E \eta _0 (B)\leq \alpha \E \langle h,\eta_0\rangle \leq \alpha \E W(\eta _0)<\infty$ for some $\alpha=\alpha(B)>0$.

Under this assumption, \eqref{eqqwwrq324} implies that, for a fixed $B \in \mathscr{B}_b(\R ^\d)$, the process $(D_t(B))_{t\geq 0 }$ is uniformly integrable on finite time intervals. Therefore by Doob--Meyer decomposition theorem there exists a unique
predictable increasing process $(A_t(B))_{t\geq 0 }$ such that 
$D_t(B) - A_t(B)$ is a martingale.

\begin{lem}\label{compensator}
Let $B \in \mathscr{B}_b(\R ^\d)$ and $\E \eta _0 (B)<\infty$. Then
	\begin{equation}
		A_t(B) = \int\limits_0 ^t \eta _{s-}(B)ds, \quad t\geq0. \label{A-compens}
	\end{equation}
\end{lem}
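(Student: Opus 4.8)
The goal is to identify the Doob--Meyer compensator of the death-counting process $D_t(B)$, and the natural guess, motivated by Definition~\ref{defBaD}(2), is that each particle currently in $B$ contributes intensity $1$ to deaths in $B$, so the compensator should be $\int_0^t \eta_{s-}(B)\,ds$. The cleanest way to establish this is to exhibit $M_t := D_t(B) - \int_0^t \eta_{s-}(B)\,ds$ as an $\{\mathscr{F}_t\}$-martingale and then invoke uniqueness in the Doob--Meyer decomposition (which applies since, as noted just before the statement, $(D_t(B))_{t\ge 0}$ is uniformly integrable on finite intervals and $t\mapsto \int_0^t\eta_{s-}(B)\,ds$ is predictable and increasing). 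So the whole task reduces to the martingale property of $M_t$.

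**Key steps.** First I would go back to the stochastic equation \eqref{se GK}, which I would rewrite as a decomposition $\eta_t(B) = \eta_0(B) + \mathrm{(births\ in\ }B\mathrm{\ up\ to\ }t) - D_t(B)$, or more usefully track the ``deaths'' directly. The death of a particle born at $(s,x)$ with clock $r$ occurs at time $s+r$; a particle from $\eta_0$ with clock $\tau$ dies at $\tau$. Thus $D_t(B)$ can be written as an integral against $\tilde N$ and $\tilde\eta_0$: namely the count of $(s,x,u,r)\in\tilde N$ with $x\in B$, $u\le b(x,\eta_{s-})$, $s\le t$, $s+r\le t$, plus the count of $(x,\tau)\in\tilde\eta_0$ with $x\in B$, $\tau\le t$. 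Second, I would compute $\E[D_t(B)]$ and, more generally, the conditional increments. The Poisson structure of $\tilde N$ with mean measure $ds\,dx\,du\,e^{-r}dr$ (compatible with $\{\mathscr{F}_t\}$, using the predictability of $s\mapsto b(x,\eta_{s-})$) lets me apply the Campbell/Mecke formula conditionally: the $\tilde N$-part of $D_t(B)$ has compensator $\int_0^t\!\int_B b(x,\eta_{s-})\bigl(\int_0^{t-s} e^{-r}dr\bigr)dx\,ds$, which after differentiating in $t$ is awkward because of the $t$ inside the inner integral. The trick is to instead directly verify the martingale property of $M_t$ by a Fubini/Campbell computation of $\E[M_t - M_{s}\mid\mathscr{F}_s]$: deaths in $(s,t]$ of particles alive at time $s$ contribute, in expectation given $\mathscr{F}_s$, exactly $\eta_s(B)\cdot(1-e^{-(t-s)})$ (each alive particle has a residual $\mathrm{Exp}(1)$ lifetime, by the memorylessness built into the exponential clocks $r$ and $\tau$), while particles born in $(s,t]$ in $B$ die within $(s,t]$ with the remaining small-order contribution; matching this against $\E[\int_s^t\eta_{v-}(B)\,dv\mid\mathscr{F}_s]$ and using $\frac{d}{dt}\E[\eta_t(B)]$-type bookkeeping (birth rate $\le \mathbf b$, death rate $1$) closes the identity. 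Third, once $M_t$ is a martingale, uniqueness in Doob--Meyer gives \eqref{A-compens}.

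**Main obstacle.** The delicate point is the rigorous handling of the residual-lifetime argument: one must show that, conditionally on $\mathscr{F}_s$, the collection of particles present at time $s$ each carries an independent $\mathrm{Exp}(1)$ residual clock, so that their deaths in $(s,t]$ form (conditionally) the superposition governing the $\eta_s(B)(1-e^{-(t-s)})$ term, and simultaneously that the birth mechanism contributes the correct first-order term $\int_s^t(\mathrm{birth\ rate\ into\ }B)\,dv$ offset by subsequent deaths of those new particles. This requires carefully using the compatibility of $\tilde N$ with the filtration together with the thinning/projection structure (the clock coordinate $r$ being independent $\mathrm{Exp}(1)$ and ``used up'' only at the death time), essentially an $L^1$ version of the infinitesimal description in Definition~\ref{defBaD}. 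An alternative that sidesteps some of this is to approximate by the dominating Surgailis process $\xi_t$ (for which the analogous compensator identity is classical, the birth rate being the constant $\mathbf b$) and localize, but I expect the direct Campbell-formula computation on \eqref{se GK} to be the most transparent route; the bookkeeping of the ``born-then-died within $(s,t]$'' second-order terms is where care is needed to confirm they vanish after dividing by $t-s$ and letting $t\downarrow s$, i.e. that $t\mapsto M_t$ has the martingale property rather than merely $\E M_t = \E M_0$.
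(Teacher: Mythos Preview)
Your proposal is correct and follows essentially the same route as the paper: decompose $D_t(B)$ into deaths of initially present particles versus deaths of later-born particles via \eqref{se GK}, use the memorylessness of the exponential clocks to argue that residual lifetimes at time $s$ are conditionally i.i.d.\ $\mathrm{Exp}(1)$ given $\mathscr{F}_s$, and control the ``born-then-died in $(s,t]$'' contribution as $o(t-s)$. The paper organizes this slightly differently---it reads off the jump rate of $D_t(B)$ from the conditional probabilities $\Pc{D_{t+\Delta t}(B)-D_t(B)=1}{\mathscr{F}_t}=\eta_t(B)\Delta t+o(\Delta t)$ and invokes the standard ``pure jump process with given rate has that rate as compensator density'' fact---but the substance, and in particular the key obstacle you flag (independence of residual clocks from $\mathscr{F}_t$), is identical.
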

\begin{proof}
By \eqref{se GK}, $D_t(B)=S_1 (t)+S_2 (t)$, where
\begin{align*}
S_1(t):&= \tilde N\Big( \bigl\{ (x,s,r,u)\mid s \in (0,t ], u \leq b(x, \eta _{s-}), x \in B, r \leq t-s \bigr\} \Big)\\
&=\tilde N\Big( \bigl\{ (x,s,r,u)\mid s \in (0,t ], x \in \eta _s, x \in B, r \leq t-s \bigr\} \Big),\\
S_2(t):&=\tilde{\eta} \Big( \bigl\{ (x,r)\mid  x \in B, r \leq t-s \bigr\} \Big)=\tilde{\eta} \Big( \bigl\{ (x,r)\mid  x \in B, r \leq t-s \bigr\} \Big)
\end{align*}
Both $S_1$ and $S_2$ are, evidently, increasing processes. 

We are going to show firstly that	
\begin{equation}\label{clay}
	S_1(t) - \int\limits_0 ^t \bigl\lvert(\eta _{s-}\setminus \eta_0) \cap B \bigr\rvert ds
	\end{equation}
	is a martingale. To this end we write 
  \begin{align*}
  &\quad S_1 (t + \Delta t) - S_1 (t ) \\
  &= 
  \tilde N\Big( \bigl\{ (x,s,r,u)\mid s \in (0,t ], x \in (\eta _s\setminus \eta _0), x \in B,  t-s < r \leq t + \Delta t - s \bigr\} \Big)
  \\&\quad 
	+ \tilde N\Big( \bigl\{ (x,s,r,u)\mid s \in (t,t + \Delta t ], x \in (\eta _s\setminus \eta _0), x \in B,  r \leq t + \Delta t - s \bigr\} \Big)
	\\&
	=: S_3(t, t + \Delta t) + S_4(t, t + \Delta t).
	\end{align*}
	Note that $S_3(t, t + \Delta t)$ is the number of particles born during $(0,t]$ dying during $(t, t+ \Delta t]$,
	and  $S_4(t, t + \Delta t)$ is the number of particles who both are being born and die during $(t, t+ \Delta t]$.
	Since the lifespan of every particle is a unit exponential, for every $T >0$ a.s.
	\begin{gather*}
	 \lim\limits _{\Delta t \to 0} \sup\limits_{t \in [0,T]} S_4(t, t + \Delta t) = 0,\\
\shortintertext{and also}
	 \PP{ S_4(t, t + \Delta t) > 0 } = o(\Delta t), \quad t > 0.
	\end{gather*}
	For $x \in \eta_t \setminus \eta _0 $, the `residual clock times' (see also \cite{GarciaKurtz}) 
	$r-(t-s)$ are independent of $\mathscr{F}_t$ by the properties of a Poisson point process,
	and hence the residual clock times have the same unit exponential distribution.
	Therefore,
	conditionally on $\mathscr{F}_t$, $S_3(t, t + \Delta t)$ has 
	the binominal distribution with parameters $|(\eta_t\setminus \eta _0) \cap B|$ and $1 - e^{-\Delta t}$. Consequently, 
	\begin{align} 
	\Pc{S_1 (t + \Delta t) - S_1 (t ) = 1 }{\mathscr{F}_t} & = |(\eta_t\setminus \eta _0) \cap B| \Delta t + o(\Delta t), \label{S1-1} \\
	\Pc{S_1 (t + \Delta t) - S_1 (t ) = 0 }{\mathscr{F}_t} & = 1 - |(\eta_t\setminus \eta _0) \cap B| \Delta t + o(\Delta t),  \\
	\Pc{S_1 (t + \Delta t) - S_1 (t ) > 1 }{\mathscr{F}_t} & = o(\Delta t), \label{S1-3} 
	\end{align}
	
	Thus, $(S_1 (t))_{t\geq0}$ is a pure jump type process with unit jumps, and it follows from 
	\eqref{S1-1}-\eqref{S1-3} that the rates of jumps at time $t$ are given by $ |(\eta_t\setminus \eta _0) \cap B|$.
	Hence the process in \eqref{clay} is indeed a martingale. 
	
	Similarly one can show that $S_2 (t) - \int\limits _{0} ^t |\eta_{s-}\cap \eta _0 \cap B| ds$ is a martingale.
	Therefore, 
	\[
		D_t(B) - \int\limits _0 ^t \eta _{s} (B)ds = D_t(B) - \int\limits _0 ^t \eta _{s-} (B)ds
	\]
	is also a martingale.
	\end{proof}
	
\begin{rmk}It follows from Lemma \ref{compensator} that the point process $D$
	on $\R _+ \times \R ^\d$ defined by
	 $(t,x) \in D \Leftrightarrow  x \in \eta _{t-} \setminus \eta_t $ can be viewed as a point process with the predictable compensator given by \eqref{A-compens}.
\end{rmk}	

	\begin{prop}\label{it is a martingale}
		Suppose that $\E W(\eta _0) < \infty$.
		The process
		 	\begin{equation}\label{shoehorn}
		 M_t: = W(\eta _t) - \int\limits _{0} ^t LW (\eta _s) ds
		 \end{equation}
		 is an $(\mathscr{F}_t)$-martingale.
	\end{prop}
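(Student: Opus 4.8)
The plan is to verify the martingale property of $M_t$ in \eqref{shoehorn} by decomposing $W(\eta_t)$ along the jumps of the process and identifying the two contributions — births and deaths — with predictable integrals. Concretely, I would write, for $0\le s<t$,
\[
W(\eta_t)-W(\eta_s)=\sum_{\text{births in }(s,t]}\bigl(W(\eta_{u})-W(\eta_{u-})\bigr)+\sum_{\text{deaths in }(s,t]}\bigl(W(\eta_{u})-W(\eta_{u-})\bigr),
\]
using that a.s. $\eta$ has only isolated birth/death jumps and that $\eta_u\in\Theta$ for all $u$ by Proposition~\ref{propnew}, so every increment $W(\eta_u)-W(\eta_{u-})$ is finite and given by \eqref{twinge1} or \eqref{twinge11}. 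The first sum is an integral against the birth point process $\tilde N$ restricted to $\{u\le b(x,\eta_{u-})\}$, whose predictable compensator (by the definition of $\tilde N$ as a Poisson process with intensity $ds\,dx\,du\,e^{-r}dr$, together with the compatibility/adaptedness in Definition~\ref{def:sol}) is $\int_0^t\!\int_{\R^\d}b(x,\eta_{s-})\bigl(W(\eta_{s-}\cup x)-W(\eta_{s-})\bigr)\,dx\,ds$, i.e. the birth part of $\int_0^t LW(\eta_s)\,ds$. The second sum is an integral against the death point process $D$, whose predictable compensator was computed in Lemma~\ref{compensator} to be $\int_0^t \eta_{s-}(dx)\,ds$ in the spatial sense, so it equals $\int_0^t\sum_{x\in\eta_{s-}}\bigl(W(\eta_{s-}\setminus x)-W(\eta_{s-})\bigr)\,ds$, the death part of $\int_0^t LW(\eta_s)\,ds$.

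The technical backbone is to promote these formal compensator identities to genuine martingale statements, and here the integrability granted by Corollary~\ref{slink} is exactly what is needed: since $\E\int_0^t|LW(\eta_{s-})|\,ds<\infty$, the compensator $\int_0^t LW(\eta_s)\,ds$ is an integrable process with integrable total variation, so it is legitimate to subtract it and to apply the optional/Fubini-type arguments that turn a compensated sum-over-jumps into a mean-zero martingale increment. I would argue the death part first, essentially reading it off Lemma~\ref{compensator} by replacing the indicator $I\{x\in B\}$ there with the weight $h(x)+\sum_{y\in\eta_{s-}\setminus x}\psi(x,y)$ and checking the corresponding integrability via \eqref{expiate1} and $\E\int_0^t W(\eta_{s-})\,ds<\infty$; this reduces, after a localization in space (the spatial cutoff $B\uparrow\R^\d$), to the single-set statement already proved. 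The birth part is handled by the same localization together with the Watanabe-type characterization (or directly the Poisson structure of $\tilde N$ and its $\mathscr{F}_t$-compatibility): conditionally on $\mathscr{F}_{s}$, the probability of a birth in $(s,s+\Delta s]$ landing near $x$ with residual lifetime surviving is $b(x,\eta_{s-})\,dx\,\Delta s+o(\Delta s)$, which yields the stated intensity, and the jump size $W(\eta_{s-}\cup x)-W(\eta_{s-})$ is $\mathscr{F}_{s-}$-measurable. Combining, $M_{t}-M_{s}$ has conditional expectation zero given $\mathscr{F}_s$, and $M_t$ is integrable because $\E W(\eta_t)<\infty$ (Proposition~\ref{propnew}) and $\E\int_0^t|LW(\eta_s)|\,ds<\infty$ (Corollary~\ref{slink}).

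I expect the main obstacle to be the spatial localization and the passage to the limit $B\uparrow\R^\d$: the building blocks (Lemma~\ref{compensator} for deaths, the Poisson compensator for births) are naturally stated with a bounded Borel set $B$, whereas $W$ involves the unbounded weights $h$ and $\psi$ summed over all of $\R^\d$, so one must justify interchanging the limit with expectations and with the stochastic integrals. This is where the a priori bounds do the work: \eqref{bland} gives $h\le G$ and $\langle G\rangle<\infty$, \eqref{profligate}–\eqref{bland} control $\sum_{y\in\eta}\psi(x,y)\le \b^{-1} h(x)/2$-type estimates (cf. \eqref{expiate}), and Corollary~\ref{slink} provides the uniform integrability in time; together with the domination $\eta_t\subset\eta_0\cup\Pi_t$ and $\E W(\Pi_t)<\infty$ from \eqref{expWPifinite}, dominated convergence applies to each of the two sums, and the martingale property is preserved under the $L^1$-limit. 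A secondary, more routine point is to argue that the process $(M_t)$ is $(\mathscr{F}_t)$-adapted and that $t\mapsto\int_0^t LW(\eta_s)\,ds$ is (a.s.) well defined and continuous, both of which follow from Corollary~\ref{slink} and the Skorokhod-path regularity of $\eta$ assumed in Definition~\ref{def:sol}.
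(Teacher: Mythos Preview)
Your proposal is correct and follows essentially the same approach as the paper: decompose the increments of $W(\eta_t)$ into birth and death jumps, identify each with an integral against a point process, subtract the respective predictable compensators (the Poisson intensity for births, Lemma~\ref{compensator} for deaths, citing \cite{IkedaWat}), localize spatially with a bounded $B$, and pass to the limit via dominated convergence using Proposition~\ref{propnew} and Corollary~\ref{slink}. The only cosmetic difference is that the paper phrases the localization as working with $W(\eta_t\cap B)$ rather than truncating the integrals, but the substance is identical.
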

	\begin{proof} 
	Let $B \in\mathscr{B}_b(\R ^\d)$. Then
	\begin{align*}
	&\quad	W(\eta _t \cap B) -  \ W  (\eta _0) \\
	&= \int\limits _{(0,t] \times B \times \R _+ } I _{ [0,b(x,\eta _{s-} )] } (u) \bigl\{W(\eta _{s-} \cup x) - W(\eta _{s-}) \bigr\}
	 N_b(ds, dx, du)
\\&\quad
	+ \int\limits _{(0,t] \times B} \bigl\{W(\eta _{s-} \setminus x) - W(\eta _{s-}) \bigr\} D(ds, dx)
	\\&= \int\limits _{(0,t] \times B \times \R _+ } I _{ [0,b(x,\eta _{s-} )] } (u) \bigl\{W(\eta _{s-} \cup x) - W(\eta _{s-}) \bigr\}
	\big( N_b(ds, dx, du) - dsdxdu \big)
\\&\quad
	 + \int\limits _{(0,t] \times B} b(x,\eta _{s-} ) 
	 \bigl\{W(\eta _{s-} \cup x) - W(\eta _{s-}) \bigr\} ds dx 
\\&\quad
	+ \int\limits _{(0,t] \times B} \bigl\{W(\eta _{s-} \setminus x) - W(\eta _{s-}) \bigr\}  \big( D(ds, dx) - ds \eta _{s-}(dx) \big)
\\&\quad
	+ \int\limits _{(0,t] \times B} \bigl\{W(\eta _{s-} \setminus x) - W(\eta _{s-}) \bigr\}  ds \eta _{s-}(dx).
	\end{align*}
	By Lemma \ref{compensator},	the integrals with respect to  $N_b(ds, dx, du) - dsdxdu$ and $D(ds, dx) - ds \eta _{s-}(dx)$ are martingales
	as integrals with respect to the difference between a point process and its compensator, see e.g.
	  \cite[(3.8), Chapter 2]{IkedaWat}.
	  Therefore,
	\begin{align*}
	W(\eta _t \cap B) & - \int\limits _{(0,t] \times B} b(x,\eta _{s-} ) \bigl\{W(\eta _{s-} \cup x) - W(\eta _{s-}) \bigr\} ds dx 
	\\
	 & - \int\limits_{(0,t]} ds \sum\limits _{x \in \eta _{s-}\cap B} \bigl\{W(\eta _{s-} \setminus x) - W(\eta _{s-}) \bigr\}
	\end{align*}
	is indeed an $(\mathscr{F}_t)$-martingale. The statement of the Lemma follows then from the dominated convergence theorem by using Proposition~\ref{propnew} and Corollary~\ref{slink}.
\end{proof}

	\begin{defi}
		We will call a function $F : \Theta \to \R _+$
		a Forster--Lyapunov, or an FL function, if
		there exist $M,c>0$ such that, cf. \eqref{the generator},
		\begin{equation*}
		L F (\eta) \leq M  - cF(\eta), \ \ \ \eta \in  \Theta.
		\end{equation*}
	\end{defi}

	\begin{lem}\label{walk out on}
		The function $W$ is an FL function:
		\begin{equation}\label{FLest}
		LW (\eta)  \leq  \textbf{b}  \langle h\rangle  - \frac 12 W (\eta),  \quad \eta \in \Theta.
		\end{equation}
	\end{lem}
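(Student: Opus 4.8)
The plan is to compute $LW(\eta)$ directly from the generator \eqref{the generator} and the explicit action of $W$ on single-point additions and removals, namely \eqref{twinge1} and \eqref{twinge11}, which are already at our disposal since $\eta\in\Theta$. Splitting $LW$ into the death part and the birth part, the death part is
\[
\sum_{x\in\eta}\bigl(W(\eta\setminus x)-W(\eta)\bigr)=-\langle h,\eta\rangle-2V(\eta)=-W(\eta)-V(\eta),
\]
using \eqref{twinge11} and the fact that summing $\sum_{y\in\eta\setminus x}\psi(x,y)$ over $x\in\eta$ counts each unordered pair twice. This is the crucial negative contribution.

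Next I would bound the birth part from above. Using \eqref{twinge1},
\[
\int_{\R^\d} b(x,\eta)\bigl(W(\eta\cup x)-W(\eta)\bigr)\,dx=\int_{\R^\d}b(x,\eta)\Bigl(h(x)+\sum_{y\in\eta}\psi(x,y)\Bigr)\,dx,
\]
and since $b\geq0$ and $b\leq\mathbf b$, this is at most $\mathbf b\langle h\rangle+\mathbf b\sum_{y\in\eta}\phi(y)\int_{\R^\d}\phi(x)K(|x-y|)\,dx$. By \eqref{profligate} the inner integral is at most $C_1$, so the birth part is at most $\mathbf b\langle h\rangle+\mathbf b C_1\sum_{y\in\eta}\phi(y)$, and then \eqref{bland} gives $\mathbf b C_1\phi(y)\leq\tfrac12 h(y)$, so the birth part is bounded by $\mathbf b\langle h\rangle+\tfrac12\langle h,\eta\rangle$. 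This is exactly the estimate already recorded in \eqref{expiate} in the proof of Lemma~\ref{lemabsLW}, so it can simply be cited.

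Combining the two pieces,
\[
LW(\eta)\leq \mathbf b\langle h\rangle+\tfrac12\langle h,\eta\rangle-\langle h,\eta\rangle-V(\eta)=\mathbf b\langle h\rangle-\tfrac12\langle h,\eta\rangle-V(\eta)\leq \mathbf b\langle h\rangle-\tfrac12\bigl(\langle h,\eta\rangle+V(\eta)\bigr)=\mathbf b\langle h\rangle-\tfrac12 W(\eta),
\]
where in the last inequality I used $V(\eta)\geq\tfrac12 V(\eta)\geq0$. This yields \eqref{FLest} with $M=\mathbf b\langle h\rangle$ and $c=\tfrac12$. There is essentially no obstacle here: the only thing to be careful about is that all manipulations are legitimate, i.e. that the sums and integrals involved are absolutely convergent for $\eta\in\Theta$ so that rearrangement is valid — but this is precisely what Lemma~\ref{lemabsLW} already guarantees, since it shows $|LW(\eta)|<\infty$ on $\Theta$. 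So the proof is a short bookkeeping argument assembling \eqref{twinge1}, \eqref{twinge11}, \eqref{expiate}, \eqref{profligate} and \eqref{bland}.
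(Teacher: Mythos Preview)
Your proof is correct and follows essentially the same route as the paper: compute $LW$ via \eqref{twinge1}--\eqref{twinge11}, bound the birth integral by $\mathbf b\langle h\rangle + C_1\mathbf b\sum_{y}\phi(y)\leq \mathbf b\langle h\rangle + \tfrac12\langle h,\eta\rangle$ using \eqref{profligate}--\eqref{bland}, and combine with the death part $-\langle h,\eta\rangle-2V(\eta)$. There is a harmless slip in your combined display (you wrote $-V(\eta)$ where the death contribution gives $-2V(\eta)$), but the final inequality is unaffected since you only need $-\tfrac12 V(\eta)$.
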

	\begin{proof}
	By \eqref{the generator}, \eqref{twinge1}, \eqref{twinge11}, for each $\eta\in\Theta$, we have
\begin{align*}
LW(\eta)&=- \sum\limits _{z \in \eta} h(z) -2 \sum\limits _{\{x,y\} \subset \eta} \psi(x,y)\\&\quad+
\int\limits _{\R ^\d} b(x,\eta)   h(x)dx  + \sum_{y\in\eta}\int\limits _{\R ^\d} b(x,\eta)  \psi(x,y)  dx.
\end{align*}
Then, by \eqref{b_is_bdd}, \eqref{profligate}, \eqref{bland},
	\begin{align*}
LW(\eta)&\leq - \sum\limits _{z \in \eta} h(z) -2 \sum\limits _{\{x,y\} \subset \eta} \psi(x,y)+
\textbf{b}\langle h\rangle  +C_1\textbf{b} \sum_{y\in\eta} \phi(y)\\
&\leq \textbf{b}\langle h\rangle- \frac{1}{2}\sum\limits _{z \in \eta} h(z) -2 \sum\limits _{\{x,y\} \subset \eta} \psi(x,y),
\end{align*}
that yields \eqref{FLest}.
		\end{proof}

	\begin{thm}\label{prop1}
	Suppose that  $\E W(\eta _0) < \infty $. Then
		\begin{equation}\label{limsupest}
		\limsup\limits _{t \to \infty} \E W(\eta _t) \leq 2 \textbf{b} \langle h\rangle.
		\end{equation}
	\end{thm}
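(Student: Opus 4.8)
The plan is to combine the FL estimate from Lemma~\ref{walk out on} with the martingale property of $M_t$ from Proposition~\ref{it is a martingale} to derive a differential inequality for the function $t\mapsto \E W(\eta_t)$, and then solve it asymptotically via a Gronwall-type argument. Concretely, taking expectations in \eqref{shoehorn} and using that $M_t$ is a martingale with $M_0 = W(\eta_0)$, we get
\[
\E W(\eta_t) = \E W(\eta_0) + \E\int_0^t LW(\eta_s)\,ds,
\]
which is legitimate because $\E\int_0^t |LW(\eta_{s-})|\,ds < \infty$ by Corollary~\ref{slink} (and $\eta_{s-} = \eta_s$ for a.a. $s$). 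Substituting the bound $LW(\eta)\leq \mathbf{b}\langle h\rangle - \tfrac12 W(\eta)$ from \eqref{FLest}, valid since a.s. $\eta_s\in\Theta$ for all $s>0$ by Proposition~\ref{propnew}, yields
\[
\E W(\eta_t) \leq \E W(\eta_0) + \mathbf{b}\langle h\rangle\, t - \frac12\int_0^t \E W(\eta_s)\,ds.
\]

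From here I would argue as follows. Set $u(t):=\E W(\eta_t)$, which is finite for all $t\geq0$ by Proposition~\ref{propnew} and locally integrable (again by Corollary~\ref{slink} and the above). The integral inequality $u(t)\leq u(0) + \mathbf{b}\langle h\rangle\,t - \tfrac12\int_0^t u(s)\,ds$ can be handled by introducing $U(t):=\int_0^t u(s)\,ds$, so that $U'(t) \leq u(0) + \mathbf{b}\langle h\rangle\,t - \tfrac12 U(t)$; multiplying by the integrating factor $e^{t/2}$ gives $\bigl(e^{t/2}U(t)\bigr)' \leq e^{t/2}\bigl(u(0)+\mathbf{b}\langle h\rangle\,t\bigr)$. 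Integrating and dividing back by $e^{t/2}$ bounds $U(t)$, and then feeding this back into $u(t)\leq u(0)+\mathbf{b}\langle h\rangle\,t - \tfrac12 U(t)$ produces an explicit bound on $u(t)$ whose exponentially growing terms cancel, leaving $\limsup_{t\to\infty} u(t)\leq 2\mathbf{b}\langle h\rangle$. Alternatively, and perhaps more cleanly, one can first establish that $u$ is actually differentiable (or work with the upper-right Dini derivative) with $D^+u(t)\leq \mathbf{b}\langle h\rangle - \tfrac12 u(t)$, from which $D^+\bigl(e^{t/2}(u(t)-2\mathbf{b}\langle h\rangle)\bigr)\leq 0$, so $u(t)-2\mathbf{b}\langle h\rangle \leq e^{-t/2}\bigl(u(0)-2\mathbf{b}\langle h\rangle\bigr)$, giving \eqref{limsupest} at once (indeed with the stronger conclusion that the bound holds for all $t$ when $u(0)\leq 2\mathbf{b}\langle h\rangle$).

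The main obstacle I anticipate is purely a matter of rigor rather than of idea: one must be careful that the identity $\E W(\eta_t) = \E W(\eta_0) + \E\int_0^t LW(\eta_s)\,ds$ is fully justified — this rests on $M_t$ being a genuine (not merely local) martingale, which Proposition~\ref{it is a martingale} asserts, and on the integrability furnished by Corollary~\ref{slink}; and that the passage from the integral inequality to the pointwise bound does not require any unproven regularity of $u(t)$ beyond local integrability, which the integrating-factor argument applied to $U(t)$ avoids. One should also note $\eta_{s-}=\eta_s$ for all but countably many $s$, so replacing $LW(\eta_{s-})$ by $LW(\eta_s)$ under the integral is harmless. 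Everything else is elementary calculus, so I would present the Gronwall step briefly and spend the few remaining words confirming the integrability hypotheses are in place.
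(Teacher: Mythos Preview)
Your proposal is correct and follows essentially the same route as the paper: take expectations in \eqref{shoehorn} using the martingale property, apply the FL bound \eqref{FLest}, and conclude via a Gronwall/comparison argument. The paper asserts directly that $w(t):=\E W(\eta_t)$ is differentiable and applies the comparison principle to $w'(t)\leq \mathbf{b}\langle h\rangle-\tfrac12 w(t)$, whereas you are, if anything, more careful about the regularity of $u=w$; either of your two suggested routes (integrating factor on $U$, or Dini derivative) is fine.
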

	\begin{proof}
	By Proposition~\ref{propnew}, $w(t) : = \E W (\eta _t)<\infty$ for $t>0$.
	By Proposition~\ref{it is a martingale}, \eqref{shoehorn} defines a martingale. Taking the expectation in \eqref{shoehorn}, we get
	\[
	 w(t) = \E \int\limits _{0} ^t  LW (\eta _s) ds +w (0).
	\]
	  Hence $w$ is differentiable. Taking the derivative, we obtain
	  \[
	   w'(t) = \E LW (\eta _t) \leq  \textbf{b} \langle h\rangle- \frac12 w(t),
	  \]
by \eqref{FLest}. By the comparison principle,
	 \[
	 w(t)\leq e^{-\frac12 t}w(0)+\textbf{b}\langle h\rangle \int_{0}^t e^{-\frac12 (t-s)}ds\leq e^{-\frac12 t}w(0)+2\textbf{b}\langle h\rangle,
	 \]
	 that yields \eqref{limsupest}.
	 \end{proof}

	Now we are going to apply the techniques similar to the considered in \cite{MeynTweedie3}.	Let $\delta >0$ be a small number.
	For $K >0$, let $\tau _K$ be the return times to the set $\{\zeta \in \Theta: W(\zeta)< K\}$, namely,
	\begin{equation}\label{deftauK}
	\tau _K = \inf\bigl\{t > \delta \bigm\vert W(\eta _t)< K \bigr\}
	\end{equation}

	\begin{prop}
		Suppose that 	$\E  W (\eta _{0}) < \infty$. Then, for each	$K >\textbf{b}  \langle h\rangle $, 
	\begin{equation*}
		\E \tau _{K} \leq \frac{\E  W (\eta _{0})}{K - \textbf{b}  \langle h\rangle}.
		\end{equation*}
	\end{prop}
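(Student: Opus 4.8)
The plan is to use the Foster--Lyapunov estimate \eqref{FLest} together with the optional stopping / Dynkin-type identity coming from the martingale in Proposition~\ref{it is a martingale}. First I would observe that the set $\{\zeta\in\Theta:W(\zeta)<K\}$ is reachable: since $\E W(\eta_0)<\infty$, Theorem~\ref{prop1} gives $\limsup_{t\to\infty}\E W(\eta_t)\le 2\mathbf{b}\langle h\rangle<\infty$, so $\E W(\eta_t)$ stays bounded; combined with Markov's inequality this forces the process to enter $\{W<K\}$ for $K>\mathbf{b}\langle h\rangle$ after the deterministic delay $\delta$, hence $\tau_K<\infty$ a.s. (and in fact we will get $\E\tau_K<\infty$ directly from the estimate below, so strictly speaking a.s.\ finiteness can be deduced a posteriori).

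The core step is a stopped Dynkin formula. By Proposition~\ref{it is a martingale}, $M_t=W(\eta_t)-\int_0^t LW(\eta_s)\,ds$ is an $(\mathscr{F}_t)$-martingale. I would apply optional stopping at the bounded stopping time $\tau_K\wedge n$ for $n\in\N$, yielding
\[
\E W(\eta_{\tau_K\wedge n}) = \E W(\eta_0) + \E\int_0^{\tau_K\wedge n} LW(\eta_s)\,ds .
\]
For $s<\tau_K$ we have $\eta_s\notin\{W<K\}$, i.e.\ $W(\eta_s)\ge K$, so the FL estimate \eqref{FLest}, namely $LW(\eta)\le \mathbf{b}\langle h\rangle-\tfrac12 W(\eta)$, gives in particular the cruder bound $LW(\eta_s)\le \mathbf{b}\langle h\rangle - K \le -(K-\mathbf{b}\langle h\rangle)<0$ on $\{s<\tau_K\}$. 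Dropping the nonnegative term $\E W(\eta_{\tau_K\wedge n})\ge0$ and rearranging, I obtain
\[
(K-\mathbf{b}\langle h\rangle)\,\E(\tau_K\wedge n)\le \E\int_0^{\tau_K\wedge n}\bigl(K-\mathbf{b}\langle h\rangle\bigr)\,ds \le -\E\int_0^{\tau_K\wedge n} LW(\eta_s)\,ds \le \E W(\eta_0).
\]
Letting $n\to\infty$ and using monotone convergence on the left gives $\E\tau_K\le \E W(\eta_0)/(K-\mathbf{b}\langle h\rangle)$, which is the claim.

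The main obstacle is justifying the optional stopping / Dynkin identity rigorously: the martingale $M_t$ was only shown to be a genuine (not merely local) martingale under $\E W(\eta_0)<\infty$ via Corollary~\ref{slink}, and one must confirm that $\tau_K\wedge n$ is a legitimate bounded stopping time (this needs $\tau_K$ to be an $(\mathscr{F}_t)$-stopping time, which follows from right-continuity of paths in $D_{\Gamma_G}[0,\infty)$ and measurability of $W$) and that the stopped process $M_{t\wedge\tau_K\wedge n}$ still has integrable increments — here the uniform-in-time control $\E\int_0^t|LW(\eta_{s-})|\,ds<\infty$ from Corollary~\ref{slink} does the work, since $\tau_K\wedge n\le n$. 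A minor point is handling the distinction between $\eta_s$ and $\eta_{s-}$ inside the integrals, but the death and birth mechanisms have no fixed times of discontinuity, so these agree Lebesgue-a.e.\ and the integrals coincide. Everything else is the elementary rearrangement above.
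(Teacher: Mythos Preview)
Your argument is exactly the paper's argument: apply optional stopping to the martingale $M_t=W(\eta_t)-\int_0^t LW(\eta_s)\,ds$ of Proposition~\ref{it is a martingale} at the bounded stopping time $\tau_K\wedge t$, use the Foster--Lyapunov inequality on $\{s<\tau_K\}$ to make the integrand $\le -(K-\mathbf{b}\langle h\rangle)$, drop the nonnegative term $\E W(\eta_{\tau_K\wedge t})$, and let $t\to\infty$. Your side remarks on why $\tau_K$ is a stopping time, why the stopped identity is legitimate (via Corollary~\ref{slink}), and why $\eta_s$ versus $\eta_{s-}$ is immaterial are all correct and slightly more explicit than the paper.

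One arithmetic slip, which the paper's own proof also contains: from \eqref{FLest}, i.e.\ $LW(\eta)\le\mathbf{b}\langle h\rangle-\tfrac12 W(\eta)$, together with $W(\eta_s)\ge K$, one obtains only $LW(\eta_s)\le\mathbf{b}\langle h\rangle-\tfrac{K}{2}$, not $\mathbf{b}\langle h\rangle-K$ as you (and the paper) write. With the factor $\tfrac12$ kept, the argument actually yields
\[
\E\tau_K\le\frac{2\,\E W(\eta_0)}{K-2\mathbf{b}\langle h\rangle}\qquad\text{for }K>2\mathbf{b}\langle h\rangle,
\]
which is the same qualitative conclusion with adjusted constants. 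This is a cosmetic issue with the constant in the FL inequality, not with your strategy.
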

\begin{proof}
	 Denote $\kappa = K - \textbf{b}  \langle h\rangle >0$.
	 By \eqref{FLest} and \eqref{deftauK}, we have a.s. on $\{t < \tau _{K }\}$,
	 \begin{equation}\label{estforsmalltime}
	 LW (\eta _t) \leq \textbf{b}  \langle h\rangle- K = -\kappa.
	 \end{equation}
Next, by \eqref{shoehorn}, 
	\[
	\E W (\eta _{\tau _{K } \wedge t }) - \E \int\limits _{0} ^{\tau _{K } \wedge t } L W(\eta _{s}) ds = \E  W (\eta _{0}),
	\]
    hence, by \eqref{estforsmalltime},
	\[
	\E  W (\eta _{0}) \geq
	\E W (\eta _{\tau _{K } \wedge t }) + \kappa \E\tau _{K } \wedge t 
	\]
	and, therefore,
	\[
	 \kappa \E\tau _{K } \wedge t  \leq \E  W (\eta _{0}) - \E W (\eta _{\tau _{K } \wedge t })  \leq \E  W (\eta _{0}).
	\]
	Taking  $t \to \infty$,  we get the desired result.
\end{proof}

The next proposition shows the existence of an exponential moment
of   $\tau _K$ for sufficiently large $K$.
	\begin{prop}\label{prop:expmoments}
 Assume that $\E  W (\eta _{0}) < \infty$. Then,
   for all $\theta \in (0,\frac 12)$, there exists $K_\theta>0$ such that, for $K>K_\theta$,
	\begin{equation*}
	\E e^{\theta \tau _{K}} < \infty.
	\end{equation*}
\end{prop}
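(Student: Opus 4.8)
The plan is to use the Foster--Lyapunov estimate \eqref{FLest} together with an exponential change of scale, following the Meyn--Tweedie approach. First I would introduce, for $\theta\in(0,\tfrac12)$ to be fixed, the process $Y_t:=e^{\theta t}W(\eta_t)$ and compute (heuristically, to be justified rigorously) that its "generator" acting on this time-dependent functional is $\theta e^{\theta t}W(\eta)+e^{\theta t}LW(\eta)\leq e^{\theta t}\bigl(\theta W(\eta)-\tfrac12 W(\eta)+\mathbf{b}\langle h\rangle\bigr)$. Since $\theta<\tfrac12$, the coefficient $\theta-\tfrac12$ is negative, so on the set $\{W(\eta)\geq K\}$ with $K$ large enough (namely $K>K_\theta:=\frac{\mathbf{b}\langle h\rangle}{1/2-\theta}$), the bracket is $\leq 0$. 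This is the key mechanism: the exponential growth factor $\theta$ is dominated by the Lyapunov contraction rate $\tfrac12$, so that the stopped process $e^{\theta(t\wedge\tau_K)}W(\eta_{t\wedge\tau_K})$ becomes a supermartingale.

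The rigorous implementation would run as follows. For a bounded Borel $B$, repeat the martingale argument of Proposition~\ref{it is a martingale} applied to the functional $\eta\mapsto e^{\theta s}W(\eta)$ in place of $W$; more precisely, using the product rule for the (Stieltjes) integral $e^{\theta t}W(\eta_t)=W(\eta_0)+\int_0^t \theta e^{\theta s}W(\eta_{s-})\,ds+\int_0^t e^{\theta s}\,dW(\eta_s)$ and inserting the compensator computations from Lemma~\ref{compensator} and Proposition~\ref{it is a martingale}, one gets that
\begin{equation*}
\widetilde M_t:=e^{\theta t}W(\eta_t)-W(\eta_0)-\int_0^t e^{\theta s}\bigl(\theta W(\eta_s)+LW(\eta_s)\bigr)\,ds
\end{equation*}
is an $(\mathscr{F}_t)$-martingale. (The integrability needed to justify this is exactly what Proposition~\ref{propnew}, Corollary~\ref{slink}, and the bound \eqref{usedlatter} provide, after multiplying by the locally bounded factor $e^{\theta s}\leq e^{\theta t}$.) Now I would stop at $\tau_K\wedge t$, take expectations, and use that on $\{s<\tau_K\}$ we have $W(\eta_{s})\geq K$ (at least for $s>\delta$; the contribution of $s\in[0,\delta]$ is controlled by $\E W(\eta_0)$ via \eqref{usedlatter}), hence $\theta W(\eta_s)+LW(\eta_s)\leq (\theta-\tfrac12)W(\eta_s)+\mathbf{b}\langle h\rangle\leq (\theta-\tfrac12)K+\mathbf{b}\langle h\rangle\leq 0$ for $K>K_\theta$. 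This yields $\E\bigl[e^{\theta(\tau_K\wedge t)}W(\eta_{\tau_K\wedge t})\bigr]\leq \E W(\eta_0)+C(\delta,\theta)$ with $C$ finite. Since $W\geq 0$ and, on $\{\tau_K<\infty\}$, $W(\eta_{\tau_K})$ need not be bounded below away from $0$, I would instead bound $\E\bigl[e^{\theta(\tau_K\wedge t)}W(\eta_{\tau_K\wedge t})\bigr]$ from below. The cleanest route: note $W(\eta_{\tau_K})\geq 0$ always, but this gives nothing, so instead I would slightly enlarge the target set — replace $\{W<K\}$ by $\{W<K\}$ and observe $W(\eta_{\tau_K})<K$ at the return time (by right-continuity of paths and the definition of $\tau_K$ as an infimum over $t>\delta$, with the usual caveat that one takes $K$ a continuity level or argues via $\{W\le K\}$). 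Actually the standard fix is to use the lower bound $W(\eta_t)\geq \alpha\langle h,\eta_0\rangle$-type arguments is not available, so one bounds $e^{\theta(\tau_K\wedge t)}\cdot 0$ trivially and instead works with $\E[e^{\theta(\tau_K\wedge t)}\mathbf 1_{\{\tau_K\le t\}}W(\eta_{\tau_K})]$; since $W$ is nonnegative this term is $\geq 0$, which is not enough.

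Let me record the correct endgame: by optional stopping on the supermartingale $S_t:=e^{\theta(t\wedge\tau_K)}W(\eta_{t\wedge\tau_K})+e^{\theta(t\wedge\tau_K)}\cdot 0$ one does not directly get $\E e^{\theta\tau_K}$; the trick (Meyn--Tweedie) is to note that for $K'\in(\mathbf b\langle h\rangle, K)$ the set $\{W<K'\}$ lies inside $\{W<K\}$, and on the complement of $\{W<K\}$ one has the strict drift; combined with $W\geq 0$ one instead adds a constant: consider $\widehat W:=W+1$. Then on $\{W\ge K\}$, $\theta\widehat W+L\widehat W=\theta+\theta W+LW\le \theta+(\theta-\tfrac12)K+\mathbf b\langle h\rangle<0$ for $K$ large, and now $\widehat W\ge 1$ everywhere, so $\E[e^{\theta(\tau_K\wedge t)}]\le \E[e^{\theta(\tau_K\wedge t)}\widehat W(\eta_{\tau_K\wedge t})]\le \E\widehat W(\eta_0)+C(\delta,\theta)<\infty$; letting $t\to\infty$ and using Fatou gives $\E e^{\theta\tau_K}<\infty$. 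Here $C(\delta,\theta)$ accounts for $\int_0^\delta e^{\theta s}(\theta+\mathbf b\langle h\rangle+|LW(\eta_s)|)\,ds$ which is integrable by Corollary~\ref{slink} and \eqref{usedlatter}. The main obstacle I anticipate is purely technical: rigorously deriving the martingale property of $\widetilde M_t$ (the time-dependent version of Proposition~\ref{it is a martingale}), i.e. checking the integrability of all the $e^{\theta s}$-weighted integrands against the compensators — but this is a routine adaptation of the estimates already established in Lemma~\ref{lemabsLW}, Corollary~\ref{slink}, and \eqref{usedlatter}, since on $[0,t]$ the weight $e^{\theta s}$ is bounded by $e^{\theta t}$. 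A secondary minor point is handling the initial segment $[0,\delta]$ in the definition of $\tau_K$, which only affects the additive constant and not the finiteness.
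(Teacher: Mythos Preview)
Your proposal is essentially the same as the paper's proof: the paper works from the outset with $\Phi(t,x)=e^{\theta t}(x+1)$, i.e.\ exactly your $e^{\theta t}\widehat W$ with $\widehat W=W+1$, shows that $\Phi(t,W(\eta_t))-\int_0^t\bigl(\theta\Phi(s,W(\eta_s))+e^{\theta s}LW(\eta_s)\bigr)\,ds$ is a \emph{local} martingale (via the integration-by-parts argument you sketch), bounds the integrand on $\{s<\tau_K\}$ using the FL estimate and $W\geq K$, and extracts $\E e^{\theta(\tau_K\wedge t\wedge\sigma_n)}$ from the lower bound $\Phi\geq e^{\theta t}$, giving $K_\theta=(\theta+\mathbf b\langle h\rangle)/(\tfrac12-\theta)$.

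Two minor technical differences worth noting. First, the paper only claims a local martingale and carries a localizing sequence $\{\sigma_n\}$ through optional stopping, whereas you assert the full martingale property; your justification via the $e^{\theta s}\le e^{\theta t}$ bound together with Corollary~\ref{slink} is plausible but the local-martingale route is safer and costs nothing. Second, your handling of the initial segment $[0,\delta]$ (absorbing it into an additive constant $C(\delta,\theta)$) is actually more careful than the paper, which silently uses $W(\eta_s)\ge K$ for all $s<\tau_K$ even though the definition of $\tau_K$ only guarantees this for $s>\delta$.
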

\begin{proof}
Fix any $\theta \in (0, \frac 12)$, and define
\[
\Phi (t,x) : = e ^{\theta t}(x + 1)\geq 1, \quad t,x\in\R_+.
\]

We are now going to show that
\begin{equation}\label{faucet}
\Phi (t, W (\eta _t)) - \int\limits _0 ^t \mathbb{L} \Phi (s, W (\eta _s)) ds
\end{equation}
is a local martingale, where
\begin{equation*}
\mathbb{L} \Phi (s, W (\eta _s)) := \theta \Phi (s, W (\eta _s)) + e^{\theta s} LW (\eta _s) .
\end{equation*}
By  \cite[Proposition 3.2, Chapter 2]{EthierKurtz} and since $e^{\theta t}$ is locally of bounded variation, the process
\begin{equation} \label{maroon}
\begin{split}
\widetilde{M}_t:= & e ^{\theta t}\left[W (\eta _t) + 1 - \int\limits _0 ^t LW (\eta _s) ds \right]
\\
& - \int\limits _{0} ^t \left[ W (\eta _s) + 1 - \int\limits _0 ^s LW (\eta _u) du \right] \theta e^{\theta s}ds
\end{split}
\end{equation}
is a local martingale.
Now,
\begin{equation*}
\begin{gathered}
e ^{\theta t} \int\limits _0 ^t LW (\eta _s) ds - \int\limits _{0} ^t \int\limits _0 ^s LW (\eta _u) du \theta e^{\theta s}ds
\\
=  e ^{\theta t} \int\limits _0 ^t LW (\eta _s) ds -
\int\limits _{0} ^t  LW (\eta _u) du \int\limits _{u} ^t \theta e^{\theta s}ds
\\
= e ^{\theta t} \int\limits _0 ^t LW (\eta _s) ds -
\int\limits _{0} ^t  LW (\eta _u) (e ^{\theta t} - e ^{\theta u}) du =
\int\limits _{0} ^t   e ^{\theta u}  LW (\eta _u) du,
\end{gathered}
\end{equation*}
hence from \eqref{maroon}
\begin{align*}
\widetilde{M}_t &=  e ^{\theta t}\left[W (\eta _t) + 1  \right]
 - \int\limits _{0} ^t \left[ W (\eta _s) + 1 \right] \theta e^{\theta s}ds
 - \int\limits _{0} ^t   e ^{\theta s}  LW (\eta _s) ds
 \\
 &= \Phi (t, W (\eta _t)) - \int\limits _{0} ^t \theta \Phi (t, W (\eta _s))ds - \int\limits _{0} ^t   e ^{\theta s}  LW (\eta _s) ds,
\end{align*}
and we see that the process in \eqref{faucet} is indeed a local martingale.

By Lemma~\ref{walk out on},
\[
\mathbb{L} \Phi (s, W (\eta _s)) \leq
e^{\theta s}\left[\theta W (\eta _s)  + \theta + \textbf{b} \langle h\rangle - \frac 12 W (\eta _s)    \right].
\]
Take a sequence of stopping times $\{\sigma _n \}$, such that $\sigma _n \nearrow \infty$, $n\to\infty$, a.s. Then
\begin{align*}
&\quad \E e^{\theta \tau _{K } \wedge t \wedge \sigma _n} \\
& \leq \E e^{\theta \tau _{K } \wedge t \wedge \sigma _n} [W(\eta _{e^{\theta \tau _{K } \wedge t \wedge \sigma _n}} ) +1]\\&
= \E \Phi (\tau _{K } \wedge t \wedge \sigma _n, W (\eta _{\tau _{K } \wedge t \wedge \sigma _n}))
\\
& = \E\Phi (0, W(\eta _0)) + \E\int\limits _{0} ^{\tau _{K } \wedge t \wedge \sigma _n} \mathbb{L} \Phi (s, W (\eta _s))  ds
\\ &
\leq \E W(\eta _0) + \E\int\limits _{0} ^{\tau _{K } \wedge t \wedge \sigma _n}
e^{\theta s}\left[\theta W (\eta _s)  + \theta + \textbf{b} \langle h\rangle - \frac 12 W (\eta _s)    \right] ds
\\
& \leq \E W(\eta _0) + \E\int\limits _{0} ^{\tau _{K } \wedge t \wedge \sigma _n}
e^{\theta s}\biggl(\Bigl(\theta-\frac 12\Bigr) K   + \theta + \textbf{b} \langle h\rangle   \biggr) ds.
\end{align*}
Therefore, for $K > \bigl(\theta + \textbf{b}  \langle h\rangle\bigr)\bigl(\frac 12 - \theta\bigr) ^{-1}$ we get
\begin{equation*}
 \E e^{\theta \tau _{K } \wedge t \wedge \sigma _n}  \leq  \E W(\eta _0).
\end{equation*}
Taking here $n \to \infty$ and then $t \to \infty$ concludes the proof.
\end{proof}

\section{Uniqueness of the degenerate invariant distribution for sublinear birth rate}\label{sec:uniqinv}

In this Section, we study some general properties of birth-and-death processes which are described through Definition~\ref{defBaD} and which have sublinear birth rate $b$, namely,
\begin{equation}\label{sublin}
b(x,\eta)\leq \sum_{y\in\eta} g(x-y), \quad \eta \in\Gamma_G,\ x\notin\eta,
\end{equation}
for some $g:\R^\d\to(0,\infty)$, such that $g(x)\leq BG(x)$, $x\in\R^\d$, with some $B>0$. We will assume that $(\eta_t)_{t\geq0}$ is the unique solution to \eqref{se GK} in the sense of Definition~\ref{def:sol}.

We are going to find sufficient conditions for $g$ such that
\begin{equation}\label{smallg}
\langle g\rangle<1
\end{equation}
would imply that the Dirac measure concentrated at $\varnothing$
is the only invariant distribution for $\eta _t$ on $\Gamma_G$.

Note that, \eqref{sublin} implies $b(x,\varnothing)=0$ and hence the empty configuration is a trap. Therefore, the Dirac measure concentrated at $\varnothing$ is indeed an invariant distribution for $(\eta _t)$, so one need to show the uniqueness only.

Again, our first example is the rate given by \eqref{brate fecundity} under Conditions~\ref{con1}--\ref{con3}. Then Condition~\ref{con3} implies \eqref{sublin} with
\begin{align}
g(x)&=r_p a(x), \quad x\in\R^\d,\label{gisrpa}\\
\shortintertext{where}
r_p&:=\sup_{s\in\R_+} (1+ps)e^{-s}=\begin{cases}
1, & 0\leq p\leq 1,\\
p e^{\frac{1}{p}-1}, & p>1,
\end{cases}\label{rpconst}
\end{align}
i.e. \eqref{smallg} takes the form
\[
	r_p \langle a\rangle <1.
\]

Another example is the establishment birth rate \eqref{brate extablishment}. The condition $c(x)\leq p \phi(x)$, $x\in\R^\d$, with $p\geq0$, cf.~\eqref{estkernest}, implies that the surviving rate \eqref{survrate} is bounded by \eqref{rpconst}, and hence \eqref{sublin} also holds with $g$ given by \eqref{gisrpa}.

Finally, an evident example is $b(x,\eta)=\sum_{y\in\eta} g(x-y)$. The corresponding process is then a special form of a spatial branching (nonlocal) process (when each particle $y$ may die and produce zero or two off-springs: one of them is at the same position  $y$ and another is distributed according to the kernel $g$), also known as a contact process in the continuum, see \cite{KonSkor,Dur1979}. This rate is not bounded and does not satisfy the assumption \eqref{b_is_bdd} of Theorem~\ref{thm_exist_uniq}, however, it is straightforward to check that it satisfies the condtions of \cite[Theorem~2.13, Lemma~3.14]{GarciaKurtz} and hence the statement of Theorem~\ref{thm_exist_uniq} holds true for it. Note that then the assumption \eqref{smallg} describes the so-called sub-critical regime when the process `dies out', see e.g. \cite[Section~3]{Dur1979}.

\begin{thm}
Let \eqref{sublin} hold with 
\begin{equation}
g(x)=b_g(|x|)\leq C(1+|x|)^{-\d-2\varepsilon}, \quad x\in\R^\d, \label{b-est0}
\end{equation}
for some $C>0$, where
 $b_g:\R_+\to(0,\infty)$ is a continuously decreasing to $0$ function. Suppose also that \eqref{smallg} holds. 
Then the Dirac measure concentrated at $\varnothing$
is the only invariant distribution for $\eta _t$ on $\Gamma_G$.
\end{thm}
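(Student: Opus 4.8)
The plan is to exploit the sublinearity \eqref{sublin} to compare $\eta_t$ against a (multi-type/branching) dominating process whose expected local mass can be computed explicitly, and then show that under \eqref{smallg} this expected mass decays to zero, forcing $\eta_t\to\varnothing$ in distribution from every initial condition. Concretely, I would first construct, on the same probability space via the driving Poisson process $\tilde N$, a process $\zeta_t\supset\eta_t$ governed by the (possibly unbounded) branching birth rate $\hat b(x,\zeta)=\sum_{y\in\zeta}g(x-y)$ with unit death rate; this is legitimate because, as noted in the excerpt, this rate still satisfies the Garc\'{i}a--Kurtz hypotheses, so \eqref{se GK} with $b$ replaced by $\hat b$ has a unique solution, and the same coupling argument as in the proof of the stochastic-domination proposition (using $b(x,\eta_{s-})\le \hat b(x,\zeta_{s-})$ whenever $\eta_{s-}\subset\zeta_{s-}$, which is preserved by the dynamics) gives $\eta_t\subset\zeta_t$ a.s. for all $t$. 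Thus it suffices to prove that the branching process $\zeta_t$ with $\langle g\rangle<1$ is a.s. eventually empty, or at least that $\E\langle G,\zeta_t\rangle\to 0$, starting from $\zeta_0=\eta_0$ with $\E\langle G,\eta_0\rangle<\infty$ (which may be assumed by first conditioning, or by a localization/truncation of the initial configuration).

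The second step is a first-moment computation for $\zeta_t$. Writing $u_t(x)\,dx$ (heuristically) for the intensity measure $\E\,\zeta_t(dx)$, the branching structure gives the linear evolution $\partial_t u_t = -u_t + g * u_t$, i.e. $u_t = e^{-t}\sum_{n\ge0}\frac{t^n}{n!}g^{*n}*u_0$, so that $\langle u_t\rangle = e^{-t(1-\langle g\rangle)}\langle u_0\rangle$ when $u_0\in L^1$; more robustly, pairing against the weight $G$ and using $g*G \le \langle g\rangle\, \tilde G$-type bounds together with the convolution inequality \eqref{G_ineq}, one gets $\E\langle G,\zeta_t\rangle \le e^{-t(1-\langle g\rangle)}\,\E\langle G,\eta_0\rangle \to 0$. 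The decay condition \eqref{b-est0}, namely $g(x)\le C(1+|x|)^{-d-2\varepsilon}=CG^2(x)\cdot(1+|x|)^{d}$ — more usefully $g \le C\,G\cdot G$ with an extra $\varepsilon$ of room — is exactly what makes $g*G$ controllable by $G$ (after using $G(x-y)G(y)\le G(x)$ and integrating the remaining integrable factor), so that the semigroup $e^{-t}\sum \frac{t^n}{n!}g^{*n}*$ acts boundedly on the weighted space with norm $e^{-t(1-\langle g\rangle)}$. Rigorously this moment identity should be derived from \eqref{se GK} for $\zeta$ by taking expectations of $\langle f,\zeta_t\rangle$ for $f$ bounded by a multiple of $G$, using the compensator of the driving Poisson process, exactly as compensators were computed in Lemma~\ref{compensator}.

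The third step converts the $L^1(G)$-decay of the dominating process into uniqueness of the invariant measure. If $\mu$ is any invariant distribution for $\eta_t$ on $\Gamma_G$, then for $\mu$-a.e.\ initial condition $\langle G,\eta_0\rangle<\infty$; by a cutoff argument (start instead from $\eta_0\cap B_R$, use stochastic monotonicity in the initial configuration) one reduces to $\E\langle G,\eta_0\rangle<\infty$. Then $\E_\mu\langle G,\eta_t\rangle \le \E_\mu\langle G,\zeta_t\rangle\to 0$, but by invariance the left-hand side equals $\E_\mu\langle G,\eta_0\rangle$, which is therefore $0$; hence $\mu(\{\varnothing\})=1$.

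The main obstacle I anticipate is making the first-moment computation for the \emph{unbounded} branching process $\zeta_t$ fully rigorous: one must justify that $\E\langle G,\zeta_t\rangle<\infty$ for all $t$ (non-explosion in the weighted sense) and that the expectation may be differentiated and identified with the convolution semigroup, since the crude Surgailis domination used in Section~\ref{sec:boundedbirth} is unavailable here. I would handle this by a monotone approximation: truncate the birth rate to $\hat b\wedge n$, apply Theorem~\ref{thm_exist_uniq} and the martingale/compensator machinery of Lemma~\ref{compensator}–Proposition~\ref{it is a martingale} to the truncated process to get the moment identity with an inequality, pass $n\to\infty$ by monotone convergence of the coupled processes, and use \eqref{b-est0} to keep all weighted convolution norms finite and bounded by $e^{-t(1-\langle g\rangle)}$ uniformly in $n$; \eqref{smallg} is precisely the spectral condition ensuring the limiting bound still decays.
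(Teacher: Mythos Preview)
Your overall strategy---dominate by the linear branching process $\zeta_t$ with rate $\hat b(x,\zeta)=\sum_{y\in\zeta}g(x-y)$ and show first-moment decay---is reasonable and, with the right weight, would succeed. The genuine gap is in the second step: the inequality $g*G\le\langle g\rangle\,G$ that you invoke to close the moment bound is \emph{false} in general, and condition \eqref{b-est0} does not rescue it. The convolution inequality \eqref{G_ineq} gives only $(g*G)(x)\le C'\,G(x)$ for some constant $C'$ that is typically strictly larger than $\langle g\rangle$ (and may well exceed $1$); for a concrete instance in $\d=1$ with $G(x)=(1+|x|)^{-2}$ and $g=\tfrac14\mathbb{1}_{[-1,1]}$ one checks $(g*G)(2)=\tfrac{1}{16}>\tfrac{1}{18}=\langle g\rangle G(2)$. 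Consequently the claimed bound $\E\langle G,\zeta_t\rangle\le e^{-t(1-\langle g\rangle)}\E\langle G,\eta_0\rangle$ is unjustified, and without it your third step collapses. The natural weight for which the sub-invariance $g*w\le\langle g\rangle w$ holds exactly is $w\equiv 1$, but then $\langle w,\eta\rangle=|\eta|=\infty$ for infinite configurations, which is precisely the difficulty.

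The paper resolves this by constructing the right weight rather than using $G$. It first replaces $g$ by a slightly larger radial function $c_g$ still satisfying $\langle c_g\rangle<1$ and having the tail needed for a multi-dimensional Kesten-type estimate (Theorem~4.1 of \cite{FT2017b}); that estimate guarantees the convolution series $f:=\sum_{n\ge1}c_g^{*n}$ converges and satisfies $f\le K_1 G$. By construction $f=c_g+c_g*f$, hence $g*f\le c_g*f=f-c_g$, which is exactly the sub-invariance you need but could not get for $G$. With $F(\eta)=\langle f,\eta\rangle$ one obtains $LF(\eta)\le -\langle c_g,\eta\rangle$ directly for $\eta_t$ (no branching comparison needed), so $F(\eta_t)+\int_0^t\langle c_g,\eta_{s-}\rangle\,ds$ is a nonnegative supermartingale and $\E\int_0^\infty\langle c_g,\eta_s\rangle\,ds\le\E F(\eta_0)<\infty$. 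Invariance then forces $\langle c_g,\eta_0\rangle=0$ $\mu$-a.s. The infinite-moment case is handled not by monotonicity in the initial condition (which is not assumed for $\eta_t$) but by the truncation $\eta_0^M=\eta_0\mathbb{1}\{\langle f,\eta_0\rangle\le M\}$ together with pathwise uniqueness for \eqref{se GK}. In short, the missing idea in your argument is the construction of a $G$-dominated weight with the exact renewal identity $c_g*f=f-c_g$; once you have it, either your branching route or the paper's direct generator route goes through.
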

\begin{proof}
Recall, that we have to show that the uniqueness of the invariant distributnon only. Note also that \eqref{b-est0} yields $g(x)\leq C G(x)$, $x\in\R^\d$. 

Let $\sigma_d$ denote the surface area of a unit sphere in $\R^\d$; then, by \eqref{b-est0} and \eqref{smallg}, 
\[
	\langle g\rangle =\sigma_d\int_0^\infty b_g(s)s^{\d-1}ds<1.
\]

Choose $R>0$, such that $C\leq (1+R)^{\frac{1}{2}\varepsilon}$ and also
\begin{equation}\label{choiceR}
\sigma_d\int_{R}^\infty (1+s)^{-\d-\frac32\varepsilon}s^{\d-1}ds<1- \langle g\rangle.
\end{equation}
Then, for $|x|\geq R$,
\[
b_g(|x|)\leq \frac{C}{(1+|x|)^{\d+2\varepsilon}}\leq \frac{(1+R)^{\frac12\varepsilon}}{(1+|x|)^{\frac12\varepsilon}}\frac{1}{(1+|x|)^{\d+\frac32\varepsilon}} \leq \frac{1}{(1+|x|)^{\d+\frac32\varepsilon}}.
\]
Let $R_1\geq R$ satisfy
\[
(1+R_1)^{-\d-\frac32\varepsilon}=b_g(R).
\]
Set
\[
c_g(x)=\begin{cases}
b_g(|x|), & |x|\leq R,\\[2mm]
b_g(R), & R\leq |x|\leq R_1,\\
(1+|x|)^{-\d-\frac32\varepsilon}, & |x|\geq R_1.
\end{cases}
\]
As a result,  $c_g:\R^\d\to(0,\infty)$ is a radially symetric  continuous bounded integrable function, such that
\begin{equation}\label{c-est}
	g(x)= b_g(|x|)\leq c_g(x)\leq \begin{cases}
b_g(|x|), & |x|\leq R,\\[2mm]
(1+|x|)^{-\d-\frac32\varepsilon}, & |x|\geq R,
\end{cases}
\end{equation}
and therefore, by \eqref{choiceR}
\begin{align}
\langle c_g \rangle &\leq\int_{\{|x|\leq R\}}b_g(|x|)dx+\int_{\{|x|\geq R\}}\dfrac{1}{(1+|x|)^{\d+\frac32\varepsilon}}dx \notag \\&\leq \langle g\rangle+ \sigma_d\int_{R}^\infty (1+s)^{-\d-\frac32\varepsilon}s^{\d-1}ds<1.
\label{cgislessthan1}
\end{align}

Let $*$ denote the convolution of functions over $\R^\d$. Set, for each $n\in\N$, $c_g^{*n}(x)=(c_g*\ldots*c_g)(x)$ ($n-1$ times).
It is straightforward to check that the normalised function $\langle c_g \rangle^{-1} c_g$ satisfies the assumptions of \cite[Theorem 4.1]{FT2017b}. Then there exists
$\alpha_0 \in (0, 1)$ such that, for any $\delta \in (0, 1)$ and $\alpha\in (\alpha_0, 1)$, there exist $B_1 = B_1(\delta,\alpha) > 0$ and $\lambda = \lambda(\delta,\alpha) \in (0,1)$, such that, for all $n\in\N$,
\begin{equation}\label{Kestenbound}
c_g^{*n}(x)\leq B_1\langle c_g \rangle^n (1+\delta)^n \min\{\lambda,c_g(|x|)^\alpha\}, \quad x\in\R^\d.
\end{equation}

By \eqref{cgislessthan1}, one can fix a $\delta\in(0,1)$ such that
\begin{equation}\label{cg1plusdeltaislessthan1}
\langle c_g \rangle (1+\delta)<1.
\end{equation}
Choose also some $\alpha\in (\alpha_0, 1)$ such that
\[
\alpha\Bigl(\d+\frac{3}{2}\varepsilon\Bigr)>\d+\varepsilon.
\]
Then, by \eqref{c-est},
\begin{equation}\label{cgleqKG}
c_g(|x|)^\alpha\leq K G(x), \quad x\in\R^\d
\end{equation}
for some $K>0$.

By \eqref{Kestenbound}, \eqref{cg1plusdeltaislessthan1}, \eqref{cgleqKG}, the series
\[
f(x):=c_g(x)+\sum_{n=2}^\infty  c_g^{*n}(x), \quad x\in\R^\d
\]
converges uniformly on each compact of $\R^\d$ and its sum $f$ is a continuous function such that $f(x)\leq K_1 G(x)$, $x\in\R^\d$, for some $K_1>0$. In particular, of course, $f$ is integrable and bounded on $\R^\d$. Moreover, $f$ satisfies the equality
\begin{equation}\label{repres}
f(x)=c_g(x)+(c_g*f)(x), \quad x\in\R^\d.
\end{equation}

Set
\[
F(\eta): = \langle f, \eta \rangle, \quad \eta\in\Gamma_G.
\]
Recall that $L$ is given by \eqref{the generator}, then, for $\eta\in\Gamma_G$, \eqref{sublin}, \eqref{c-est}, and \eqref{repres} imply that
	\begin{align}
	L F(\eta) &\leq -
	\langle f,  \eta \rangle+ \sum_{y\in\eta}\int_{\R^\d}g(x-y)f(x) dx\notag
	\\ &\leq
	\langle c_g \ast f, \eta \rangle -
	\langle f,  \eta \rangle
	= - \langle c_g, \eta \rangle. \label{quail}
	\end{align}

Assume firstly that $\E F(\eta _0) < \infty$. Similarly to the proof of Proposition~\ref{it is a martingale}, one can show that $F(\eta_t)-\int_0^t LF(\eta_{s-})ds$ 	is a  martingale. 
Then \eqref{quail} implies that $ F(\eta _t) +	 \int\limits _{0} ^t \langle c_g, \eta _{s-} \rangle ds$ is a non-negative supermartingale.
	 Hence, by Doob's martingale convergence theorem, a.s.
	\begin{equation*}
	  \int\limits _{0} ^\infty \langle c_g, \eta _{s-} \rangle ds < \infty.
	\end{equation*}
	Furthermore,
		\begin{equation}\label{winnow2}
	 \E \int\limits _{0} ^\infty \langle c_g, \eta _{s-} \rangle ds  = \lim\limits _{t \to \infty}  \E \int\limits _{0} ^t \langle c_g, \eta _{s-} \rangle ds
	 \leq \E F(\eta _0) < \infty.
	\end{equation}
	Consequently, if $\mu$ is an invariant  distribution for $(\eta _t)_{t\geq 0}$ satisfying
	\begin{equation*}
	\int\limits _{\Gamma_G} \langle f,\eta \rangle \mu(d\eta) < \infty,
	\end{equation*}
	then $\mu$ is the Dirac measure   concentrated at the empty configuration 
		since otherwise $\E \int\limits _{0} ^\infty \langle c_g, \eta _{s-} \rangle ds  = \infty$ if $\eta _0$ 
		is distributed according to $\mu$, contradicting to  \eqref{winnow2}.
	
 We have shown that, apart from the delta measure at $\varnothing$, there is no invariant distribution $\mu$
		with $\int\limits _{\Gamma_G} \langle f,\eta \rangle \mu(d\eta) < \infty$.	 
		Assume that there exists an invariant distribution $\mu$ with $\int\limits _{\Gamma_G} \langle f,\eta \rangle \mu(d\eta) = \infty$. 

	 Let $\eta _0$ be distributed according to $\mu$.
	Define
\begin{equation*}
        \eta _0 ^M = \begin{cases}
        \eta _0,   & \text{ if } \langle f,\eta _0 \rangle \leq M ,
        \\
        \varnothing, & \text{ otherwise. }
        \end{cases}
        \end{equation*} 

Consider now the process $(\eta _t ^M)_{t\geq 0}$ started from $\eta _0 ^M$.
	By the uniquness of solutions to \eqref{se GK},
	on  the event $\{ \eta _0 ^M = \eta _0 \}$ we have a.s.
	$\{ \eta _t ^M = \eta _t, t \geq 0 \}$.
	From \eqref{winnow2} it follows that,
	for every $\varepsilon >0$,
			\begin{equation*}
		\inf_{ t\geq 0} \PP{ \langle c_g, \eta _{t-} ^M \rangle > \varepsilon} \leq 
\liminf_{t \to \infty} \frac{\E \langle c_g, \eta _{t-} ^M \rangle }{\varepsilon} \leq
\liminf_{t \to \infty} \frac{\E F(\eta_0) }{t\varepsilon}
		   = 0,
		\end{equation*}
	hence 
	\begin{gather*}
	  \P \{ \langle c_g, \eta _{0} \rangle > \varepsilon \}  =
	\inf  \limits _{t \geq 0}  \P \{ \langle c_g, \eta _{t-} \rangle > \varepsilon \}
	\\
	 \leq \liminf  \limits _{M \to \infty}\Big( \P \{ \eta _0 ^M \ne \eta _0  \} +
	 	\inf_{t \geq 0}  \P \{ \langle c_g, \eta _{t-} ^M \rangle > \varepsilon \}\Big)
	 	= 0,
	\end{gather*}
which contradicts to $\E \langle c_g, \eta _{0} \rangle  = \infty$ and therefore  completes the proof of the proposition.
		
\end{proof}


\begin{thebibliography}{10}

\bibitem{constantbirthrate}
F.~Baccelli, F.~Mathieu, and I.~Norros.
\newblock Mutual service processes in {E}uclidean spaces: existence and
  ergodicity.
\newblock {\em Queueing Syst.}, 86(1-2):95--140, 2017.

\bibitem{Dur1979}
R.~Durrett.
\newblock An infinite particle system with additive interactions.
\newblock {\em Adv. in Appl. Probab.}, 11(2):355--383, 1979.

\bibitem{EK14}
A.~M. Etheridge and T.~G. Kurtz.
\newblock Genealogical constructions of population models.
\newblock 2014.
\newblock arXiv:1402.6724, \emph{Ann. Probab.} (to appear).

\bibitem{EthierKurtz}
S.~N. Ethier and T.~G. Kurtz.
\newblock {\em Markov processes}.
\newblock Wiley Series in Probability and Mathematical Statistics: Probability
  and Mathematical Statistics. John Wiley \& Sons, Inc., New York, 1986.
\newblock Characterization and convergence.

\bibitem{FKKoz2011}
D.~Finkelshtein, Y.~Kondratiev, and Y.~Kozitsky.
\newblock Glauber dynamics in continuum: a constructive approach to evolution
  of states.
\newblock {\em Discrete and Cont. Dynam. Syst. - Ser A.}, 33(4):1431--1450, 4
  2013.

\bibitem{FinKoKut2}
D.~Finkelshtein, Y.~Kondratiev, and O.~Kutoviy.
\newblock Individual based model with competition in spatial ecology.
\newblock {\em SIAM J. Math. Anal.}, 41(1):297--317, 2009.

\bibitem{FinKoKut1}
D.~Finkelshtein, Y.~Kondratiev, and O.~Kutoviy.
\newblock Semigroup approach to birth-and-death stochastic dynamics in
  continuum.
\newblock {\em J. Funct. Anal.}, 262(3):1274--1308, 2012.

\bibitem{FinKoKut4}
D.~Finkelshtein, Y.~Kondratiev, and O.~Kutoviy.
\newblock Establishment and fecundity in spatial ecological models: statistical
  approach and kinetic equations.
\newblock {\em Infin. Dimens. Anal. Quantum Probab. Relat. Top.},
  16(2):1350014, 24, 2013.

\bibitem{FT2017b}
D.~Finkelshtein and P.~Tkachov.
\newblock Kesten's bound for sub-exponential densities on the real line and its
  multi-dimensional analogues.
\newblock {\em Advances in Applied Probability}, 50(2):373--395, 2018.

\bibitem{FournierMeleard}
N.~Fournier and S.~M{\'e}l{\'e}ard.
\newblock A microscopic probabilistic description of a locally regulated
  population and macroscopic approximations.
\newblock {\em Ann. Appl. Probab.}, 14(4):1880--1919, 2004.

\bibitem{Garcia}
N.~L. Garcia.
\newblock Birth and death processes as projections of higher-dimensional
  poisson processes.
\newblock {\em Adv. in Appl. Probab.}, 27(4):911–930, 1995.

\bibitem{GarciaKurtz}
N.~L. Garcia and T.~G. Kurtz.
\newblock Spatial birth and death processes as solutions of stochastic
  equations.
\newblock {\em ALEA Lat. Am. J. Probab. Math. Stat.}, 1:281--303, 2006.

\bibitem{IkedaWat}
N.~Ikeda and S.~Watanabe.
\newblock {\em Stochastic Differential Equations and Diffusion Processes}.
\newblock Nord-Holland publiching company, 1981.

\bibitem{KoKozitsy3}
Y.~Kondratiev and Y.~Kozitsky.
\newblock Self-regulation in the {B}olker-{P}acala model.
\newblock {\em Appl. Math. Lett.}, 69:106--112, 2017.

\bibitem{KoKozitsy1}
Y.~Kondratiev and Y.~Kozitsky.
\newblock The evolution of states in a spatial population model.
\newblock {\em J. Dynam. Differential Equations}, 30(1):135--173, 2018.

\bibitem{KonKutPirogov}
Y.~Kondratiev, O.~Kutoviy, and S.~Pirogov.
\newblock Correlation functions and invariant measures in continuous contact
  model.
\newblock {\em Infin. Dimens. Anal. Quantum Probab. Relat. Top.},
  11(2):231--258, 2008.

\bibitem{KL2005}
Y.~Kondratiev and E.~Lytvynov.
\newblock Glauber dynamics of continuous particle systems.
\newblock {\em Ann. Inst. H. Poincar\'e Probab. Statist.}, 41(4):685--702,
  2005.

\bibitem{KonPirogovZhizhina}
Y.~Kondratiev, S.~Pirogov, and E.~Zhizhina.
\newblock A quasispecies continuous contact model in a critical regime.
\newblock {\em J. Stat. Phys.}, 163(2):357--373, 2016.

\bibitem{KonSkor}
Y.~Kondratiev and A.~Skorokhod.
\newblock On contact processes in continuum.
\newblock {\em Infin. Dimens. Anal. Quantum Probab. Relat. Top.},
  9(2):187--198, 2006.

\bibitem{Li2011}
Z.~Li.
\newblock {\em Measure-{Valued} {Branching} {Markov} {Processes}}.
\newblock Probability and {Its} {Applications}. Springer Berlin Heidelberg,
  Berlin, Heidelberg, 2011.

\bibitem{Lig85}
T.~M. Liggett.
\newblock {\em Interacting particle systems}.
\newblock Grundlehren der Mathematischen Wissenschaften. Springer, 1985.

\bibitem{MeynTweedie3}
S.~P. Meyn and R.~L. Tweedie.
\newblock Stability of {M}arkovian processes. {III}. {F}oster-{L}yapunov
  criteria for continuous-time processes.
\newblock {\em Adv. in Appl. Probab.}, 25(3):518--548, 1993.

\bibitem{Ovaskainen+Co}
O.~Ovaskainen, D.~Finkelshtein, O.~Kutoviy, S.~Cornell, B.~Bolker, and
  Y.~Kondratiev.
\newblock A general mathematical framework for the analysis of spatiotemporal
  point processes.
\newblock {\em Theoretical ecology}, 7(1):101--113, 2014.

\bibitem{Xin}
X.~Qi.
\newblock A functional central limit theorem for spatial birth and death
  processes.
\newblock {\em Adv. in Appl. Probab}, 40(3):759–797, 2008.

\bibitem{Shi2006}
J.~Shi and R.~Shivaji.
\newblock Persistence in reaction diffusion models with weak {A}llee effect.
\newblock {\em Journal of Mathematical Biology}, 52(6):807--829, 2006.

\bibitem{Sur84}
D.~Surgailis.
\newblock On multiple {P}oisson stochastic integrals and associated {M}arkov
  semigroups.
\newblock {\em Probab. Math. Statist.}, 3(2), 1984.

\end{thebibliography}
\end{document}